\theoremstyle{definition}
\newtheorem{theorem}{Theorem}[section]
\newtheorem{lemma}[theorem]{Lemma}
\newtheorem{remark}[theorem]{Remark}
\newtheorem{proposition}[theorem]{Proposition}
\newtheorem{corollary}[theorem]{Corollary}
\newcommand{\urltilde}{\kern -.15em\lower .7ex\hbox{~}\kern .04em}  
\newcounter{sarrow}
\title{Pairs of eventually constant maps and nilpotent pairs}
\author[Weixi Chen]{Weixi Chen}
\address{Department of Mathematics, Johns Hopkins University, Baltimore, MD 21218, USA}
\email{\href{mailto:wchen159@jh.edu}{wchen159@jh.edu}}
\author[Mee Seong Im]{Mee Seong Im}
\address{Department of Mathematics, Johns Hopkins University, Baltimore, MD 21218, USA}
\email{\href{mailto:meeseong@jhu.edu}{meeseong@jhu.edu}}
\author[Mikhail Khovanov]{Mikhail Khovanov}
\address{Department of Mathematics, Johns Hopkins University, Baltimore, MD 21218, USA}
\email{\href{mailto:khovanov@jhu.edu}{khovanov@jhu.edu}}
\author[Catherine Lillja]{Catherine Lillja}
\address{Department of Mathematics, Johns Hopkins University, Baltimore, MD 21218, USA}
\email{\href{mailto:clillja1@jh.edu}{clillja1@jh.edu}}
\author[Nicolas Rugo]{Nicolas Rugo}
\address{Department of Mathematics, Johns Hopkins University, Baltimore, MD 21218, USA}
\email{\href{mailto:nrugo1@jh.edu}{nrugo1@jh.edu}}
\subjclass[2020]{Primary: 17B08, 20F18, 20F19, 20D15, 16N40;
Secondary: 05C10, 05C20, 05C85, 05C05.}
\date{December 3, 2025}
\providecommand{\keywords}[1]{\textbf{\textit{Key words and phrases.}} #1}
\keywords{Nilpotent cone, eventually constant maps, nilpotent pairs, balanced vectors, quiver representations, Boolean semiring, finite field.}
\begin{document}

\def\mfb{\mathfrak{b}}

\def\rank{\mathsf{rank}}

\def\E{\mathsf E}
\def\F{\mathbb{F}}
\def\I{\mathsf I}
\def\J{\mathsf J}
\def\R{\mathbb R}
\def\Q{\mathbb Q}
\def\Z{\mathbb Z}
\def\N{\mathbb N}
\def\C{\mathbb C}
\def\S{\mathbb S}
\def\Lin{\mathsf{Lin}}
\def\Nil{\mathsf{Nil}}
\def\SS{\mathbb S}
\def\GL{\mathsf{GL}}
\def\Graph{\mathsf{Graph}}

\def\for{\mathsf{for}}
\def\Hom{\mathsf{Hom}}
\def\End{\mathsf{End}}

\def\Der{\mathsf{Der}}
\def\Pol{\mathsf{Pol}}
\def\Span{\mathsf{Span}}

\newcommand{\dmod}{\mathsf{-mod}}
\newcommand{\comp}{\mathrm{comp}} 
\newcommand{\col}{\mathrm{col}}
\newcommand{\adm}{\mathrm{adm}}  
\newcommand{\Ob}{\mathrm{Ob}}
\newcommand{\Cob}{\mathsf{Cob}}
\newcommand{\UCob}{\mathsf{UCob}}
\newcommand{\COB}{\mathcal{COB}}
\newcommand{\ECob}{\mathsf{ECob}}
\newcommand{\id}{\mathsf{id}}
\newcommand{\undM}{\underline{M}}
\newcommand{\im}{\mathsf{im}}
\newcommand{\coker}{\mathsf{coker}}
\newcommand{\Aut}{\mathsf{Aut}}
\newcommand{\tripod}{\mathsf{Td}}
\newcommand{\BBC}{\mathbb{B}(\mathcal{C})}
\newcommand{\Pmod}{\mathrm{pmod}}
\newcommand{\gammaoneR}{\gamma_{1,R}}  
\newcommand{\gammaoneRbar}{\overline{\gamma}_{1,R}} 
\newcommand{\gammaoneRprime}
{\gamma'_{1,R}}
\newcommand{\gammaoneRbarprime}
{\overline{\gamma}'_{1,R}}
\newcommand{\qbinom}[3]{\genfrac{[}{]}{0pt}{}{#1}{#2}_{#3}}

\def\l{\lbrace}
\def\r{\rbrace}
\def\o{\otimes}
\def\lra{\longrightarrow}
\def\ed{\mathsf{ed}}
\def\Ext{\mathsf{Ext}}
\def\ker{\mathsf{ker}}
\def\mf{\mathfrak} 
\def\mcC{\mathcal{C}}
\def\bal{\mathsf{bal}}
\def\unbal{\mathsf{unbal}}
\def\mcN{\mathcal{N}}
\def\mcS{\mathcal{S}}  
\def\mcQC{\mathcal{QC}}
\def\mcA{\mathcal{A}}
\def\mcF{\mathcal{F}}
\def\mcE{\mathcal{E}}
\def\Fr{\mathsf{Fr}}  

\def\bbn{\mathbb{B}^n}
\def\ovb{\overline{b}}
\def\tr{{\sf tr}} 
\def\det{{\sf det }} 
\def\one{\mathbf{1}}   
\def\kk{\mathbf{k}}  
\def\gdim{\mathsf{gdim}}  
\def\rk{\mathsf{rk}}
\def\IET{\mathsf{IET}}
\def\SAF{\mathsf{SAF}}

\newcommand{\indexw}{\R_{>0}} 

\newcommand{\brak}[1]{\ensuremath{\left\langle #1\right\rangle}}
\newcommand{\oplusop}[1]{{\mathop{\oplus}\limits_{#1}}}
\newcommand{\addfigure}{\vspace{0.1in} \begin{center} {\color{red} ADD FIGURE} \end{center} \vspace{0.1in} }
\newcommand{\add}[1]{\vspace{0.1in} \begin{center} {\color{red} ADD FIGURE #1} \end{center} \vspace{0.1in} }
\newcommand{\vspin}{\vspace{0.1in} }

\newcommand\circled[1]{\tikz[baseline=(char.base)]{\node[shape=circle,draw,inner sep=1pt] (char) {${#1}$};}} 

\let\oldemptyset\emptyset
\let\emptyset\varnothing

\let\oldtocsection=\tocsection
\let\oldtocsubsection=\tocsubsection
\renewcommand{\tocsection}[2]{\hspace{0em}\oldtocsection{#1}{#2}}
\renewcommand{\tocsubsection}[2]{\hspace{1em}\oldtocsubsection{#1}{#2}}

\renewcommand{\kbldelim}{(}
\renewcommand{\kbrdelim}{)}

\def\MK#1{{\color{red}[MK: #1]}}
\def\bfred#1{{\color{red}#1}}

\def\MSI#1{{\color{ForestGreen}[MSI: #1]}}
\def\bfred#1{{\colo{magenta}#1}}


\begin{abstract}
Tom Leinster gave a bijective correspondence between the set of operators on a finite-dimensional vector space $V$ and the set of pairs consisting of a nilpotent operator and a vector in $V$. Over  a finite field this bijection implies that the probability that an operator be nilpotent is the reciprocal of the number of vectors in $V$. We  generalize this correspondence to pairs of operators between pairs of vector spaces and determine the probability that a random pair of operators be nilpotent. We also determine the set-theoretical counterpart of this construction and compute the number of eventually constant pairs of maps between two finite sets, closely related to the number of spanning trees in a complete bipartite graph.  
\end{abstract}

\maketitle
\tableofcontents

%
%

\section{Introduction}
\label{sec_intro} 
The nilpotent cone is foundational and prevalent in geometric and combinatorial representation theory. 
It provides key geometric and algebraic structures for the study of representations of Hecke algebras and related algebras~\cite{CG97}. 

Let $X$ be a finite-dimensional vector space over a field $\kk$ and $\mathcal{N}(X)\subset \End_{\kk}(X)$ be the set of nilpotent operators on $X$. Tom Leinster proved in \cite{Lei21} that there is a bijection between $\mathcal{N}(X)\times X$ and $\End_{\kk}(X)$. Working over a finite field with $q$ elements, the result specializes to an older theorem of Fine and Herstein \cite[Theorem 1]{FH58}, which says that the number of $n\times n$ nilpotent matrices is $q^{n(n-1)} = |X|^{n-1}$. Equivalently, the probability that a random operator be nilpotent is $q^{-n}$. A useful feature of Leinster's argument is that it requires very little calculation, and exposes a deeper structural pattern.
The set-theoretical analogue of nilpotent matrix counting reduces to the count of trees on $n$ labeled vertices, given by the celebrated Cayley's formula $n^{n-2}$; see~\cite{Lei21} and references there.

In \cite[Theorem 2]{FH58}, Fine and Herstein determine the number of nilpotent matrices over rings of the form $\mathbb{Z}/a\mathbb{Z}$, where $a$ is any positive integer. The mod $a$ count follows quite simply from the mod $p$ count, for $p$ a prime number, and this is the crux of the argument there.

\begin{figure}
    \centering
\begin{tikzpicture}[scale=0.5,decoration={
    markings,
    mark=at position 0.5 with {\arrow{>}}}]


\begin{scope}[shift={(0,0)}]


\draw[thick,fill] (0.2,2) arc (0:360:2mm);

\draw[thick,->] (0,2.45) arc (165:-165:1.65);

\end{scope}

\begin{scope}[shift={(10,0)}]


\draw[thick,fill] (0.2,2) arc (0:360:2mm);

\draw[thick,fill] (5.2,2) arc (0:360:2mm);

\node at (6.5,2) {$\Gamma$};

\draw[thick,->] (0.25,2.5) .. controls (1,4) and (4,4) .. (4.75,2.5);

\draw[thick,<-] (0.25,1.5) .. controls (1,0) and (4,0) .. (4.75,1.5);

\end{scope}


\end{tikzpicture}
    \caption{Left: a graph with a vertex and a loop. Right:  graph $\Gamma$ with 2 vertices and 2 arrows forming an oriented 2-cycle. }
    \label{fig_00010}
\end{figure}
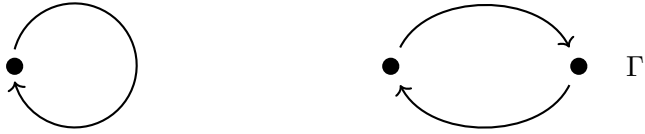
  
A linear operator on a vector space describes a representation of a quiver (oriented graph) with a single vertex and single loop at the vertex, shown in Figure~\ref{fig_00010} on the left. In the present paper we consider an extension of Leinster's construction to the oriented 2-cycle quiver $\Gamma$, shown in Figure~\ref{fig_00010} on the right. A finite-dimensional representation of $\Gamma$ over $\kk$ is given by a pair of vector spaces $V,W$ of dimensions $n,m$, respectively, and a pair of linear operators $f:V\lra W,g:W\lra V$. The analogue of the nilpotency condition for a linear operator in this case is the nilpotency of the composition $gf\in \End(V)$, equivalent to the nilpotency of $fg\in \End(W)$. 

Our main result is Theorem~\ref{thm_dim_nilpot_pairs}, which determines the number of nilpotent pairs $(f,g)$. Corollary~\ref{cor_prob_nil_pairs} rephrases it: the probability that a random pair of operators $(f,g)$ be nilpotent is $q^{-m} + q^{-n} - q^{-m-n}$. 

 The analogue of Leinster's bijection $\mathcal{N}(X)\times X\cong \End_{\kk}(X)$ is given earlier, by Theorem~\ref{thm_gen_nilpotent_two_vs}, which uses the notion of a \emph{balanced} vector, see Section~\ref{subsection_nilp_pairs_balanced_vec}. To get from that theorem to the count of nilpotent pairs in Theorem~\ref{thm_dim_nilpot_pairs} requires  looking at interactions between balanced and unbalanced vectors in $V$ and $W$, done in Section~\ref{subsection_nilp_pairs_bal_finite_field}. The key bijection  is provided by Lemma~\ref{lemma_quadruples}. The main result (Theorem~\ref{thm_dim_nilpot_pairs}) follows. Background material is given in Section~\ref{subsection_lin_alg}.

The set-theoretical counterpart of a nilpotent pair is given by a pair of finite sets and maps $f,g$ between them such that the composition $gf$ is eventually constant. In Section~\ref{subsection_background_graph}, we give a brief background on graph theory, and in Section~\ref{subsection_enumeration_eventually_const}, we enumerate eventually constant pairs of maps (Theorem~\ref{thm_eventually_const_pairs}).

The count of nilpotent matrices over the Boolean semiring is explained in Section~\ref{section_enumeration_Bool_semiring}.  
In Section~\ref{section_enumeration_Bool_semiring}, we prove that $n\times n$ nilpotent matrices over a Boolean semiring is enumerated by the number of directed acyclic graphs on $n$ ordered vertices (Proposition~\ref{prop_nilp_boolean_semiring}).

\section*{Acknowledgments}
The authors would like to thank Haihan Wu and Matthew Hamil for productive conversations. The authors would also like to thank the Department of Mathematics and Johns Hopkins University for the opportunity to conduct research in these last several months. The authors were partially supported by Simons Collaboration Award 994328. M.K. was also partially supported by NSF grant DMS-1807425.

\section{Eventually constant pairs of maps of sets}
\label{section_eventually_constant_graphs}

\subsection{Background on graph theory}
\label{subsection_background_graph}

We consider unoriented graphs without loops or multiple edges. 
 A simple path is a sequence of vertices in a graph, where each consecutive pair of vertices is connected by an edge, and no vertices, except possibly the endpoints, are repeated.
If there is a repetition, a simple path is called a cycle. A tree is an unoriented graph in which every pair of distinct vertices is connected by exactly one simple path; equivalently, a tree is a connected graph with no cycles.

A rooted tree is a tree together with a choice of vertex, which we call the root. A spanning tree inside a  connected graph $\Gamma$ is a subgraph $T \subseteq \Gamma$ such that $T$ is a tree, and maximal with respect to the property of being a tree inside $\Gamma$. This is equivalent to saying that $T$ is a tree that contains all vertices of $\Gamma$.

A graph is called bipartite or bicolorable if it is possible to color the vertices in two distinct colors, such as red and black, such that no two vertices of the same color are adjacent, i.e., connected by an edge. Recall that a graph is bipartite if and only if its cycles are all of even length~\cite[Theorem 5.3]{BM76} or~\cite{CCPS11}. Denote the complete bipartite graph by $K(m, n)$.

\subsection{The enumeration of eventually constant pairs of functions}
\label{subsection_enumeration_eventually_const}

We say that a map $f:X\lra X$ of a finite set $X$ is \emph{eventually constant} if $|\im(f^k)|=1$ for some $k\ge 0$, i.e., iterating $f$ results in a map that takes $X$ into a single element of $X$. 

Given sets $X,Y$ of cardinalities $m,n$, respectively, a  pair $(f,g)$ of maps $f:X\to Y$, $g:Y\to X$ is called an \emph{eventually constant} pair if the composite $gf:X\lra X$ is eventually constant. Equivalently, $fg:Y\lra Y$ is eventually constant. 
  Denote the set of eventually constant pairs by $P(m, n)$. The cardinality $|P(n,m)|$ is divisible by $mn$ since the eventually constant condition gives us unique ``final'' elements $x_0,y_0$ of $X,Y$, respectively, with  $f(x_0)=y_0$, $g(y_0)=x_0$ and $(gf)^k(x)=x_0, (fg)^k(y_0)=y_0$ for large $k$.  

Counting eventually constant pairs is a distributed version of Cayley's formula on counting trees. 

\begin{lemma}[Cayley's formula]
The number of unrooted trees with vertex set $X$ is $m^{m-2}$.
\end{lemma}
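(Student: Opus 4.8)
The plan is to establish Cayley's formula via a bijection between the set of trees with vertex set $X$ and the set $X^{m-2}$ of sequences of length $m-2$ with entries in $X$; since $|X^{m-2}| = m^{m-2}$, the count follows immediately. This is the classical Prüfer correspondence, and it fits the bijective spirit of the rest of the paper. I would begin by fixing a linear order on $X$, so that without loss of generality $X = \{1, 2, \ldots, m\}$. The case $m = 1$ is trivial (one tree, and $1^{m-2} = 1$), so the main argument may assume $m \geq 2$, where every finite tree has a leaf.

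The encoding map sends a tree $T$ to its Prüfer sequence as follows: repeatedly locate the leaf of smallest label, record the label of its unique neighbor as the next term of the sequence, and delete that leaf. Iterating this $m - 2$ times leaves a single edge and produces a sequence $(a_1, \ldots, a_{m-2}) \in X^{m-2}$. This map is manifestly well-defined, since at each stage a tree on at least two vertices has a well-defined smallest leaf.

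The crux, and the step I expect to be the main obstacle, is to show this map is a bijection by constructing an explicit inverse. The key fact to record first is a degree-count invariant: a vertex $v$ appears in the Prüfer sequence of $T$ exactly $\deg_T(v) - 1$ times, so in particular the leaves of $T$ are exactly the labels absent from the sequence. Given an arbitrary sequence $(a_1, \ldots, a_{m-2})$, this invariant lets me reconstruct a tree greedily: at each stage the smallest label that neither occurs in the remaining sequence nor has yet been consumed must be the current smallest leaf, so I join it to the first remaining entry, delete that entry, and continue; the two labels left at the end are joined by the final edge. I would then verify that decoding after encoding, and encoding after decoding, each return the original object, by an induction on $m$ that leans on the degree-count invariant to guarantee that the greedy choice at each step agrees with the leaf deleted during encoding.

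Finally, I would check that the reconstruction always outputs a connected acyclic graph on $m$ vertices with $m-1$ edges, confirming that the inverse map genuinely lands in the set of trees on $X$. The two maps are then mutually inverse, and $|X^{m-2}| = m^{m-2}$ counts the trees on $X$. Alternatively, the same count drops out of the Matrix--Tree theorem applied to the complete graph $K_m$, but the Prüfer bijection is preferable here because it is purely combinatorial and explicit, matching the style of the surrounding results.
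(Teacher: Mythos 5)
Your proof is correct: the Pr\"ufer correspondence, with the degree invariant (each vertex $v$ appears $\deg_T(v)-1$ times in the sequence) justifying the greedy decoding, is a complete and standard bijective proof of Cayley's formula. It is, however, a different route from the paper's: the paper does not argue Cayley's formula at all, but cites Joyal's proof, which establishes a bijection between \emph{doubly rooted} trees on $X$ and arbitrary endofunctions of $X$, giving $m^2\cdot(\text{number of trees}) = m^m$. The two approaches buy different things. Your Pr\"ufer bijection is self-contained, explicit, and maps trees directly onto $X^{m-2}$, so nothing beyond elementary graph theory is needed. Joyal's argument is less direct (it counts trees only after dividing by the $m^2$ choices of the two roots), but it is the one that meshes with the surrounding material: the correspondence between endofunctions and structured forests of rooted trees is exactly the mechanism behind Leinster's set-theoretic analogue of nilpotency (eventually constant maps), which the paper cites alongside Joyal and then generalizes in Theorem~\ref{thm_eventually_const_pairs}, where eventually constant \emph{pairs} are matched with spanning trees of $K(m,n)$ carrying a marked edge. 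In other words, the paper's choice of reference foreshadows its own method of proof for the bipartite case, whereas your argument, while perfectly valid here, would not extend as naturally to that setting.
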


\begin{proof}
We refer the reader to~\cite[page 16]{A_Joyal81}, and see~\cite{Lei21} for the connection to eventually constant maps.
\end{proof}

\begin{lemma}\label{lemma_span}
    There are $m^{n-1}n^{m-1}$ spanning trees in the complete bipartite graph $K(m, n)$.
\end{lemma}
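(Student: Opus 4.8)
The plan is to invoke the Matrix--Tree theorem (Kirchhoff's theorem), which expresses the number of spanning trees of a connected graph on $N$ vertices as $\frac{1}{N}\prod_{i=2}^{N}\lambda_i$, the product of the nonzero eigenvalues of the graph Laplacian $L=D-A$ divided by $N$. For $\Gamma=K(m,n)$ we have $N=m+n$, the graph is connected, so it suffices to determine the Laplacian spectrum of $K(m,n)$ and multiply.

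First I would write the Laplacian in block form. Ordering the $m$ vertices of one part before the $n$ vertices of the other, every vertex in the first part has degree $n$ and every vertex in the second has degree $m$, so
\[
L=\begin{pmatrix} n\,I_m & -J_{m\times n} \\ -J_{n\times m} & m\,I_n \end{pmatrix},
\]
where $J$ denotes an all-ones matrix and $I$ an identity matrix. I would then produce eigenvectors by hand. Vectors of the form $(0,v)$ with $v\in\R^n$ and $\sum_j v_j=0$ are annihilated by the off-diagonal blocks and satisfy $L(0,v)=(0,mv)$, giving eigenvalue $m$ with multiplicity $n-1$; symmetrically, vectors $(u,0)$ with $u\in\R^m$ and $\sum_i u_i=0$ give eigenvalue $n$ with multiplicity $m-1$. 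The constant vector $(\mathbf{1}_m,\mathbf{1}_n)$ lies in the kernel (eigenvalue $0$), and the remaining one-dimensional complement, spanned by $(\mathbf{1}_m,-\tfrac{m}{n}\mathbf{1}_n)$, yields eigenvalue $m+n$ by a direct check of both blocks. These account for all $m+n$ eigenvalues.

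Finally, substituting into the Matrix--Tree formula gives
\[
\frac{1}{m+n}\cdot m^{\,n-1}\cdot n^{\,m-1}\cdot (m+n)=m^{\,n-1}\,n^{\,m-1},
\]
as claimed. The only real content is the spectral computation in the second step; once the correct eigenvectors are guessed, verifying them is a one-line matrix multiplication, and the multiplicities are forced by dimension counting since $1+(n-1)+(m-1)+1=m+n$. The degenerate cases $m=1$ or $n=1$, where $K(m,n)$ is a star, should be noted separately but reduce to $n^{\,m-1}$ or $m^{\,n-1}$, consistent with the formula.

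An alternative, more combinatorial route would be to exhibit a Pr\"ufer-type bijection between spanning trees of $K(m,n)$ and pairs of sequences in $\{1,\dots,m\}^{n-1}\times\{1,\dots,n\}^{m-1}$, which is appealing given the tree-counting theme of this section; however, setting up and verifying such a bijection (tracking which part each successively pruned leaf lies in) is considerably more delicate than the eigenvalue argument, so I would use Matrix--Tree as the main proof and relegate the bijection to a remark.
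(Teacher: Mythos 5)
Your proof is correct, but it takes a genuinely different route from the paper: the paper does not prove Lemma~\ref{lemma_span} at all, it simply cites the original source (Fiedler--Sedl\'a\v{c}ek) and later references. Your spectral argument is sound and complete: the Laplacian eigenvalues of $K(m,n)$ are indeed $0$ (once), $m$ (with multiplicity $n-1$, from vectors supported on the $n$-side summing to zero), $n$ (with multiplicity $m-1$), and $m+n$ (once, from the vector $(\mathbf{1}_m,-\tfrac{m}{n}\mathbf{1}_n)$); the dimension count $1+(n-1)+(m-1)+1=m+n$ shows these exhaust the spectrum, and the Matrix--Tree formula
\[
\frac{1}{m+n}\, m^{n-1}\, n^{m-1}\,(m+n)=m^{n-1}n^{m-1}
\]
gives the claim. (The degenerate cases $m=1$ or $n=1$ need no special treatment; the computation goes through verbatim.) What your approach buys is self-containedness --- the lemma gets an actual proof rather than a pointer to the literature; what the paper's citation buys is brevity and attribution for a classical result. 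One observation: the Pr\"ufer-type bijection you relegate to a remark would actually fit the paper better than you suggest, since the proof of Theorem~\ref{thm_eventually_const_pairs} is itself a bijection between eventually constant pairs and spanning trees of $K(m,n)$ with a marked edge, so a bijective proof of Lemma~\ref{lemma_span} would make the enumeration in that section bijective end-to-end, whereas the spectral proof, while cleaner to verify, sits apart from the combinatorial theme.
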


\begin{proof}
Originally proved in~\cite{FS58}, also see~\cite{AS90} and~\cite{Pak09,DK25}.
\end{proof}

The following theorem generalizes Cayley's formula: 
\begin{theorem}
\label{thm_eventually_const_pairs}
Let $X$ and $Y$ be finite sets of cardinality $m$ and $n$, respectively. Then there are $m^{n-1}n^{m-1}(m+n-1)$ eventually constant pairs of maps between $X$ and $Y$. 
\end{theorem}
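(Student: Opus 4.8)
The plan is to set up a bijection between eventually constant pairs $(f,g)$ and pairs $(T,e)$ consisting of a spanning tree $T$ of $K(m,n)$ together with a distinguished edge $e$ of $T$; the count then follows immediately from Lemma~\ref{lemma_span}, since every spanning tree has exactly $m+n-1$ edges, yielding $m^{n-1}n^{m-1}(m+n-1)$ such pairs.

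To build the bijection I would first encode a pair $(f,g)$ as a single self-map $F$ of the disjoint union $X\sqcup Y$ with $F|_X=f$ and $F|_Y=g$, so that $gf=F^2|_X$. The functional graph of $F$ (one out-arrow per vertex) decomposes into connected components, each carrying a unique directed cycle, and since $F$ alternates between $X$ and $Y$ every such cycle has even length with equally many vertices in $X$ and in $Y$. I would then show that $gf$ is eventually constant precisely when this functional graph is connected and its unique cycle is a $2$-cycle $x_0\leftrightarrow y_0$, that is, $f(x_0)=y_0$ and $g(y_0)=x_0$. Indeed, the stable image $\bigcap_k \im\bigl((gf)^k\bigr)$ is exactly the set of $X$-vertices lying on a cycle of $F$; connectedness forces a single cycle, and the requirement that this set be a singleton forces that cycle to contain a single $X$-vertex, hence to have length two. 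This equivalence is the conceptual heart of the argument and the step I expect to require the most care.

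Granting this characterization, the forward map sends $(f,g)$ to the underlying simple graph $T$ of the functional multigraph of $F$ — in which the two arrows of the $2$-cycle collapse to a single edge $e=\{x_0,y_0\}$ — together with that edge $e$. Since the functional graph is connected on $m+n$ vertices with $m+n$ arrows whose only cycle is the $2$-cycle, the simple graph $T$ has $m+n-1$ edges and is therefore a spanning tree of $K(m,n)$, with $e$ one of its edges. For the inverse, given $(T,e)$ with $e=\{x_0,y_0\}$ (so $x_0\in X$, $y_0\in Y$ by bipartiteness), I would delete $e$ to split $T$ into the subtree $A\ni x_0$ and the subtree $B\ni y_0$, orient all edges of $A$ toward $x_0$ and all edges of $B$ toward $y_0$, and finally restore the arrows $x_0\to y_0$ and $y_0\to x_0$. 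Each oriented edge has its tail in exactly one of $X$ or $Y$, so reading off the out-arrows recovers a well-defined $f\colon X\to Y$ and $g\colon Y\to X$; by construction the resulting functional graph is connected with unique cycle the $2$-cycle at $e$, hence $(f,g)$ is eventually constant.

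Finally I would verify that the two constructions are mutually inverse. Starting from $(f,g)$, every forward orbit under $F$ runs into the central $2$-cycle, so the out-arrow $f(x)$ (respectively $g(y)$) is exactly the step toward $e$ in $T$, which is precisely what the reverse map reproduces; conversely, orienting $T$ toward $e$ and then collapsing the restored $2$-cycle returns the original pair $(T,e)$. With the bijection established, the number of eventually constant pairs equals the number of (spanning tree, distinguished edge) pairs, namely $(m+n-1)$ times the $m^{n-1}n^{m-1}$ spanning trees of $K(m,n)$ furnished by Lemma~\ref{lemma_span}, which gives $m^{n-1}n^{m-1}(m+n-1)$.
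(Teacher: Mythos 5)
Your proposal is correct and takes essentially the same route as the paper: your self-map $F$ on $X\sqcup Y$ is exactly the paper's bipartite oriented graph $G(f,g)$ with one out-arrow per vertex, your characterization of eventual constancy (unique oriented cycle, necessarily a $2$-cycle) matches the paper's, and your bijection with pairs (spanning tree of $K(m,n)$, marked edge) uses the identical cut-the-edge-and-orient-toward-the-endpoints construction before invoking Lemma~\ref{lemma_span}. The only difference is cosmetic: your argument via the stable image $\bigcap_k \im\bigl((gf)^k\bigr)$ spells out a step that the paper asserts without detailed proof.
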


\begin{proof}
 Pairs of maps $(f,g)$ between $X$ and $Y$ 
are in a bijection with bipartite oriented graphs on the vertex set pair $(X,Y)$, such that there is one oriented edge out of each vertex. The graph associated to $(f,g)$ has an oriented edge from $x\in X$ to $f(x)\in Y$ for each $x$ and an oriented edge from $y\in Y$ to $g(y)\in X$ for each $y$. Let $G(f,g)$ be the graph associated with $(f,g)$. 

An oriented cycle in a bipartite oriented graph has an even number of vertices, alternating between vertices in $X$ and $Y$. 
The graph $G(f,g)$ for an eventually constant pair $(f,g)$ has an (oriented) 2-cycle $x_0\lra f(x_0)=y_0 \lra g(y_0)=x_0$. It has no other oriented $2k$-cycles for any $k\ge 1$. 

Vice versa, if $G(f,g)$ has an oriented 2-cycle and no other oriented cycles then $(f,g)$ is an eventually constant pair. Consequently, there is a bijection between eventually constant pairs $(f,g)$ and oriented bipartite graphs $G$ on vertex set $(X,Y)$ with one edge out of each vertex such that $G$ has a unique 2-cycle and no other oriented cycles. 

Given a spanning tree $T$ in a complete (unoriented) graph $K(m,n)=K(X,Y)$, pick an edge $e$ of $T$. This edge connect vertices $x_0\in X$ and $y_0\in Y$. To the data $(T,e)$ associate an oriented bipartite graph $G(T,e)$ on vertex set $(X,Y)$ as follows. Deleting  edge $e$ from $T$ while keeping vertices $x_0,y_0$ gives a disjoint union of 2 trees $T_x,T_y$ with $x_0\in T_x,y_0\in T_y$ (so that $T\setminus \{e\}=T_x\sqcup T_y$). 

Orient each edge of $T_x$ towards $x_0$ and each edge of $T_y$ towards $y_0$. 
Define the graph $G(T,e)$ to contain all of these oriented edges and two oriented edges $x_0\lra y_0$ and $y_0\lra x_0$.
Thus, we're orienting all edges of $T$ except $e$ and converting $e$ to a pair of oriented edges between $x_0$ and $y_0$. 

Oriented graph $G(T,e)$ satisfies all properties above and determines a pair of eventually constant maps $(f,g)$. Vice versa, the graph $G(f,g)$ of an eventually constant pair determines a tree $T$ by taking the 2-cycle $x_0\lra y_0\lra x_0$ in $G(f,g)$, converting it to an unoriented edge $e$ of $T$ and adding all other edges of $G(f,g)$ without their orientations. We obtain the following result. 

\begin{lemma} The above correspondence is a bijection between eventually constant pairs of maps $(f,g)$ and spanning trees in $K(m,n)$ together with a choice of a edge in the tree. 
\end{lemma}

Any spanning tree in $K(m,n)$ contains $(m+n-1)$ edges, and there are $m^{n-1}n^{m-1}$ trees in $K(m,n)$, see Lemma~\ref{lemma_span}. This completes the proof of Theorem~\ref{thm_eventually_const_pairs}. 
\end{proof}

\begin{corollary}
\label{cor_prob_eventually_const}
    The probability that a pair $(f, g)$ of maps $f : X \to Y, \; g : Y \to X$ is eventually constant is $\frac{m+n-1}{mn}$.
\end{corollary}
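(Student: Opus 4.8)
The plan is to obtain the probability as a simple quotient: divide the count of eventually constant pairs, already supplied by Theorem~\ref{thm_eventually_const_pairs}, by the total number of pairs $(f,g)$. First I would tally the denominator. A map $f:X\to Y$ assigns to each of the $m$ elements of $X$ one of the $n$ elements of $Y$, so there are $n^m$ such maps; dually, a map $g:Y\to X$ is one of $m^n$ maps. Since $f$ and $g$ are chosen independently, the total number of pairs is $n^m\, m^n$, and every such pair is equally likely under the uniform distribution on pairs of maps.

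Next I would assemble the ratio. By Theorem~\ref{thm_eventually_const_pairs} the number of eventually constant pairs is $m^{n-1}n^{m-1}(m+n-1)$, so the desired probability is
\[
\frac{m^{n-1}\, n^{m-1}\,(m+n-1)}{m^n\, n^m}.
\]
Cancelling $m^{n-1}/m^n = 1/m$ and $n^{m-1}/n^m = 1/n$ immediately collapses this to $\frac{m+n-1}{mn}$, which is the claimed value.

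There is no genuine obstacle here: the entire combinatorial content lives in Theorem~\ref{thm_eventually_const_pairs}, and the corollary is a one-line algebraic consequence once the denominator $n^m m^n$ is correctly identified. The only point requiring any care is bookkeeping the base and exponent of the two counts $n^m$ and $m^n$ correctly — it is easy to transpose $m$ and $n$ since $|X|=m$ but maps out of $X$ land in the $n$-element set $Y$ — but once the exponents are matched against the factors $m^{n-1}$ and $n^{m-1}$ in the numerator the cancellation is forced and the result follows.
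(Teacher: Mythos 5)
Your proof is correct and matches the paper's approach exactly: the paper's entire proof is the observation that there are $m^n n^m$ pairs of maps, leaving the division by the count from Theorem~\ref{thm_eventually_const_pairs} implicit. Your write-up simply makes that one-line cancellation explicit, including the correct identification of $n^m$ maps $X\to Y$ and $m^n$ maps $Y\to X$.
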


\begin{proof}
There are $m^n n^m$ pairs of maps between $X$ and $Y$.
\end{proof}

\section{Pairs of nilpotent maps and a balanced vector}
\label{section_pairs_maps_nilpotent_finite_field}

Let $V,W$ be pairs of vector spaces of dimension $m$ and $n$, respectively, over any field $\kk$. Consider pairs of linear maps $(f, g)$, with $f \in \Hom(V,W)$ and $g \in \Hom(W,V)$. Denote the set (or cone) of nilpotent operators on $V$ by $\mathcal{N}(V) \subseteq \End(V)$. 

In this section, we generalize Leinster's main result~\cite[Theorem 5]{Lei21}.

\subsection{Linear algebra for pairs of vector spaces}
\label{subsection_lin_alg}

We begin by recalling some linear algebra. Firstly, a complement of a subspace $X$ in a vector space $V$ is a subspace $X'$ with $X \oplus X' = V$.

\begin{lemma}
\label{lemma_canonical_complement}
    There is a canonical bijection between $\Hom(V, W)$ and complements of $W$ in $V \oplus W$.
\end{lemma}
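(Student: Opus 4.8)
The plan is to realize the bijection through the \emph{graph} construction, identifying $W$ with the subspace $0 \oplus W \subseteq V \oplus W$. To a map $f \in \Hom(V,W)$ I would associate the subspace
\[
\Gamma_f := \{(v, f(v)) : v \in V\} \subseteq V \oplus W .
\]
First I would check that $\Gamma_f$ is a complement of $W$. The intersection $\Gamma_f \cap W$ is trivial, since $(v, f(v)) \in 0 \oplus W$ forces $v = 0$ and hence $f(v) = 0$; and a dimension count gives $\dim \Gamma_f + \dim W = \dim V + \dim W = \dim(V \oplus W)$, so indeed $\Gamma_f \oplus W = V \oplus W$. This shows $f \mapsto \Gamma_f$ lands in the set of complements, and because the construction never refers to a basis it is manifestly canonical.

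For the inverse direction, let $X'$ be an arbitrary complement of $W$ in $V \oplus W$. The key observation is that the first projection $\pi_V : V \oplus W \lra V$ restricts to an isomorphism from $X'$ onto $V$: its kernel on $X'$ is $X' \cap \ker \pi_V = X' \cap W = 0$ by the defining property of a complement, so $\pi_V|_{X'}$ is injective, and since $\dim X' = \dim(V \oplus W) - \dim W = \dim V$ it is also surjective. I would then set
\[
f := \pi_W \circ \bigl(\pi_V|_{X'}\bigr)^{-1} : V \lra W ,
\]
where $\pi_W : V \oplus W \lra W$ is the second projection.

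The final step is to verify that these two assignments are mutually inverse. Starting from a complement $X'$, every $x' \in X'$ is written as $(\pi_V(x'), \pi_W(x'))$, and by construction $\pi_W(x') = f(\pi_V(x'))$, so $X' = \Gamma_f$; thus passing to $f$ and back recovers $X'$. Conversely, for $X' = \Gamma_f$ the map $\pi_V|_{\Gamma_f}$ sends $(v, f(v)) \mapsto v$ with inverse $v \mapsto (v, f(v))$, and postcomposing with $\pi_W$ returns $f$, so the round trip from $f$ is the identity. I do not anticipate a genuine obstacle: the single point needing care is the claim that $\pi_V|_{X'}$ is an isomorphism, which is exactly where the defining condition $X' \oplus W = V \oplus W$ of a complement enters, and the remainder is a routine unwinding of definitions.
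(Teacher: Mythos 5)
Your proof is correct and follows the same route as the paper, which defines the bijection by sending $h \in \Hom(V,W)$ to its graph $\{(v,h(v)) : v \in V\}$ (citing Leinster's Lemma 1); you have simply spelled out the verification that the graph is a complement and the inverse construction via the projections, which the paper leaves implicit.
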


\begin{proof} A linear map $h:V\lra W$ determines the complement that consists of vectors $(v,h(v))\in V\oplus W$, $v\in V$. 
    (See ~\cite[Lemma 1]{Lei21}.) 
\end{proof}

\begin{lemma}
\label{Isomorphisms}
    Suppose $f : V \to W$ and $g : W \to V$ are linear maps. Then $gf$ and $fg$ are automorphisms of $V$ and $W$, respectively, if and only if both $f$ and $g$ are isomorphisms.
\end{lemma}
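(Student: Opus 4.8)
The plan is to prove the two implications separately, treating the forward direction as the substantive one. For the reverse implication, if $f$ and $g$ are isomorphisms, then $gf$ and $fg$ are composites of isomorphisms and hence automorphisms of $V$ and $W$ respectively; this is immediate, and I would dispose of it in a single line.

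For the forward implication, I would extract injectivity and surjectivity of each of $f$ and $g$ from the two hypotheses, using the standard relations between a composite and its factors. Concretely: if $gf$ is injective then $f$ is injective (a nonzero vector in $\ker f$ would lie in $\ker(gf)$), and if $gf$ is surjective then $g$ is surjective (since $\im(g)\supseteq \im(gf)=V$). Applying the same two observations to $fg$ yields that $g$ is injective and $f$ is surjective. Combining these, $f$ is injective (from $gf$) and surjective (from $fg$), hence an isomorphism; symmetrically, $g$ is injective (from $fg$) and surjective (from $gf$), hence an isomorphism.

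The one point worth flagging — though it is not really an obstacle — is that the argument genuinely uses both hypotheses via a cross-over: injectivity of $f$ is supplied by $gf$ while surjectivity of $f$ is supplied by $fg$, and symmetrically for $g$. Neither composition alone suffices. For instance, if $\dim V<\dim W$ one can arrange $gf=\id_V$, so that $gf$ is an automorphism, while $fg$ fails to be injective; then neither $f$ nor $g$ is an isomorphism. Since every step is a routine implication relating the injectivity or surjectivity of a composite to that of its factors, the proof is short and valid over any field and in any dimension, requiring no calculation.
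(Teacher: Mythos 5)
Your proof is correct. The paper actually states this lemma without any proof at all (it is treated as a standard fact), so there is no argument in the paper to compare against; your write-up simply supplies the routine verification the authors omitted. Both directions are handled properly: the reverse direction is the one-line composition-of-isomorphisms observation, and the forward direction correctly extracts injectivity of $f$ and surjectivity of $g$ from the hypothesis on $gf$, and injectivity of $g$ and surjectivity of $f$ from the hypothesis on $fg$. Your flag about the cross-over is a genuine point of content: the counterexample with $\dim V < \dim W$ and $gf = \id_V$ shows that a single composite being an automorphism does not suffice, which justifies why the lemma's hypothesis involves both $gf$ and $fg$. Note also that your argument never invokes finite-dimensionality, so the lemma holds for arbitrary vector spaces, which is slightly more general than the finite-dimensional setting the paper works in.
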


Lemmas~\ref{lemma_characterization_of_pairs} and~\ref{lemma_bijection_isomorphism} are generalizations of standard facts about linear operators to the case of pairs of maps. The first is a generalization of the Fitting decomposition.

\begin{lemma}
\label{lemma_characterization_of_pairs}
Let $V, W$ be finite-dimensional vector spaces. An ordered pair $(f,g)$ of linear maps, where $f\in \Hom(V,W)$ and $g\in \Hom(W,V)$,
is uniquely characterized by the following data. Take subspaces, which we write as $V_N, V_I$, such that $V = V_I \oplus V_N$, and subspaces $W_N, W_I$ of $W$ with $W = W_I \oplus W_N$. These are subject to the condition that $(gf)|_{V_I}$ and $(fg)|_{W_I}$ are automorphisms, and $(gf)|_{V_N} \in \mathcal{N}(V_N), \; (fg)|_{W_N} \in \mathcal{N}(W_N)$. 
\end{lemma}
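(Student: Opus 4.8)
The plan is to derive the two decompositions from the classical Fitting decomposition applied to the composites $gf \in \End(V)$ and $fg \in \End(W)$, and then to show that these two a priori independent decompositions are compatible with $f$ and $g$. Recall that for a single operator $T$ on a finite-dimensional space one has $V = \ker(T^k) \oplus \im(T^k)$ for $k \gg 0$, with both summands $T$-invariant, $T|_{\ker(T^k)}$ nilpotent and $T|_{\im(T^k)}$ an automorphism, and that this is the unique splitting of $V$ into $T$-invariant subspaces on which $T$ is respectively nilpotent and invertible. First I would apply this to $T = gf$, setting $V_N := \ker((gf)^k)$ and $V_I := \im((gf)^k)$ for $k$ large, and to $T = fg$, setting $W_N := \ker((fg)^k)$ and $W_I := \im((fg)^k)$. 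By construction $(gf)|_{V_N} \in \mathcal{N}(V_N)$ and $(fg)|_{W_N} \in \mathcal{N}(W_N)$, while $(gf)|_{V_I}$ and $(fg)|_{W_I}$ are automorphisms, so the required conditions hold and existence reduces to the compatibility claim.

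The core step is to check the inclusions $f(V_I) \subseteq W_I$, $f(V_N) \subseteq W_N$, $g(W_I) \subseteq V_I$ and $g(W_N) \subseteq V_N$, which say that $f$ and $g$ split along the decompositions. These rest on the intertwining identities $(fg)^j f = f (gf)^j$ and $(gf)^j g = g (fg)^j$, valid for all $j \ge 0$ by an immediate induction from $(fg)f = f(gf)$ and $(gf)g = g(fg)$. Granting these, if $v = (gf)^k u \in V_I$ then $f(v) = (fg)^k f(u) \in \im((fg)^k) = W_I$, and if $(gf)^k v = 0$ then $(fg)^k f(v) = f (gf)^k v = 0$, so $f(v) \in \ker((fg)^k) = W_N$; the inclusions for $g$ follow symmetrically. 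I would then conclude that $f = f|_{V_I} \oplus f|_{V_N}$ and $g = g|_{W_I} \oplus g|_{W_N}$ exhibit $(f,g)$ as the direct sum of an invertible pair on $(V_I, W_I)$ and a nilpotent pair on $(V_N, W_N)$.

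For uniqueness I would argue that any decompositions satisfying the stated conditions must coincide with these. For $(gf)|_{V_I}$ and $(gf)|_{V_N}$ to be endomorphisms at all, both summands of $V = V_I \oplus V_N$ must be $gf$-invariant; so this is a splitting of $V$ into $gf$-invariant subspaces on which $gf$ is respectively invertible and nilpotent, whence the uniqueness clause of the Fitting decomposition forces $V_N = \ker((gf)^k)$ and $V_I = \im((gf)^k)$. The same reasoning applied to $fg$ pins down $W_N$ and $W_I$, so the subspaces are canonically attached to $(f,g)$; conversely the direct-sum formulas recover $(f,g)$ from its two restricted pairs.

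I expect the compatibility inclusions of the second paragraph to be the only real obstacle: the Fitting decompositions of $gf$ on $V$ and of $fg$ on $W$ are manufactured separately, and nothing forces them to match up until one invokes $(fg)f = f(gf)$. That single relation is simultaneously what sends the $gf$-pieces of $V$ to the corresponding $fg$-pieces of $W$ and what transfers the nilpotent (resp.\ invertible) behaviour of $gf$ on $V_N$ (resp.\ $V_I$) to $fg$ on $W_N$ (resp.\ $W_I$). Everything else is the standard Fitting theory recalled above.
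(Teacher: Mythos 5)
Your proposal is correct and follows essentially the same route as the paper: apply Fitting's decomposition to the composites $gf$ and $fg$, then show $f$ and $g$ respect the resulting splittings so that the pair becomes block-diagonal with an invertible part on $(V_I, W_I)$ and a nilpotent part on $(V_N, W_N)$. The only difference is one of detail: the paper declares the block-diagonal form ``immediate,'' whereas you supply the intertwining identities $(fg)^j f = f(gf)^j$ that justify it, and you also spell out the uniqueness of the subspaces via the uniqueness clause of Fitting's lemma --- both worthwhile additions, not departures.
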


\begin{proof}
We can take $V_I = \bigcap_{i \ge 0} \im((gf)^i)$, $V_N = \bigcup_{i \ge 0} \ker((gf)^i)$. 
Likewise, take $W_I = \bigcap_{i \ge 0} \im((fg)^i)$ and $W_N = \bigcup_{i \ge 0} \ker((fg)^i)$. The fact that $V = V_I \oplus V_N$ and $W = W_I \oplus W_N$ is the statement of Fitting's lemma, see ~\cite[pages 113--114]{Jac89}.  

It is immediate that, 
in the above subspace decompositions, $f$ and $g$ have the following block-diagonal form 
\[ f= \begin{pmatrix}
    S_1 & 0 \\
    0 & B_1
\end{pmatrix} ,  \quad\quad 
g = 
\begin{pmatrix}
    S_2 & 0 \\
    0 & B_2
\end{pmatrix}, 
\]
where $S_i$ are invertible, and $B_i$ compose to nilpotent operators.
Maps $S_1:=f|_{V_I} :V_I\to W_I$ and $S_2:=g|_{W_I} : W_I \to V_I$ are isomorphisms, while $(gf)|_{V_N}=B_2B_1$ and $(fg)|_{W_N}=B_1B_2$ are nilpotent operators. 
\end{proof}

The following statement is clear. 
\begin{lemma}
\label{lemma_bijection_isomorphism}
There is a natural bijection between ordered bases of finite-dimensional vector space $X$ of dimension $\ell$ and isomorphisms $X \stackrel{\simeq}{\lra} \kk^{\ell}$.
\end{lemma}

\begin{corollary}
\label{corr_on_ordered_bases}
There is a bijection between pairs of ordered bases of finite-dimensional vector spaces $V$ and $W$ with $\dim V=\dim W$, and pairs of isomorphisms $(f,g)\in \Hom(V,W)\times \Hom(W,V)$.
\end{corollary}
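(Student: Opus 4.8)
The plan is to build the bijection by combining the two lemmas that immediately precede the statement. First I would fix, once and for all, an isomorphism $V \xrightarrow{\simeq} W$ is \emph{not} what we want; instead, since $\dim V = \dim W = \ell$, Lemma~\ref{lemma_bijection_isomorphism} already gives a bijection between ordered bases of $V$ and isomorphisms $V \xrightarrow{\simeq} \kk^\ell$, and likewise for $W$. So a pair of ordered bases corresponds to a pair of isomorphisms $(\alpha, \beta)$ with $\alpha : V \xrightarrow{\simeq} \kk^\ell$ and $\beta : W \xrightarrow{\simeq} \kk^\ell$. The task is then to manufacture from $(\alpha, \beta)$ a pair of isomorphisms $f : V \to W$, $g : W \to V$ in a way that is reversible.

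The natural recipe is to send the pair $(\alpha, \beta)$ to the pair $(f, g)$ defined by $f = \beta^{-1} \circ \alpha : V \to W$ and $g = \alpha^{-1} \circ \beta : W \to V$. Each of these is a composite of isomorphisms, hence an isomorphism, so $(f, g) \in \Hom(V, W) \times \Hom(W, V)$ is indeed a pair of isomorphisms. Note moreover that $g = f^{-1}$ under this assignment, so the image of the map consists of pairs in which $g$ is forced to be the inverse of $f$. This is the step where I expect to have to be careful: the corollary as phrased asks for a bijection onto \emph{all} pairs of isomorphisms $(f, g)$, but there are two independent isomorphisms worth of data on the right-hand side and apparently only the data of the composite on the left. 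So either the intended statement is that one recovers the pair of bases from $(f,g)$ by also remembering one of the two bases as auxiliary reference data, or the count must be reconciled: a pair of ordered bases is two copies of $|\GL_\ell(\kk)|$ worth of choices (after fixing reference frames), matching the $|\GL_\ell(\kk)|^2$ pairs of isomorphisms $(f,g)$.

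The clean way to carry this out, and the way I would write it, is to fix one reference object. Choose and fix an ordered basis of $V$, equivalently an isomorphism $\alpha_0 : V \xrightarrow{\simeq} \kk^\ell$ via Lemma~\ref{lemma_bijection_isomorphism}. Then an ordered basis of $V$ is the same as an element of $\GL_\ell(\kk)$ (the change of basis from $\alpha_0$), and likewise after fixing $\beta_0 : W \xrightarrow{\simeq} \kk^\ell$ an ordered basis of $W$ is an element of $\GL_\ell(\kk)$. On the other side, fixing $\alpha_0, \beta_0$ identifies $f$ with its matrix $\beta_0 f \alpha_0^{-1} \in \GL_\ell(\kk)$ and $g$ with $\alpha_0 g \beta_0^{-1} \in \GL_\ell(\kk)$. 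Under these identifications the desired bijection is simply the identity on $\GL_\ell(\kk) \times \GL_\ell(\kk)$, which makes both the bijectivity and the naturality transparent.

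The main obstacle, then, is not any hard computation but pinning down the precise sense of the correspondence so that the left and right sides genuinely have the same cardinality and the map is reversible; once the reference isomorphisms $\alpha_0, \beta_0$ are fixed and Lemma~\ref{lemma_bijection_isomorphism} is applied to each space separately, the proof reduces to observing that composing with fixed isomorphisms is invertible. I would conclude by remarking that this is the pairs-of-spaces analogue of the classical fact that, for a single space, ordered bases correspond to elements of $\GL_\ell(\kk)$, and that it is exactly the statement needed downstream to translate between bases and the isomorphism blocks $S_1, S_2$ appearing in the block-diagonal form of Lemma~\ref{lemma_characterization_of_pairs}.
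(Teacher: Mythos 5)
Your proof is correct, and it supplies exactly the step that the paper leaves implicit: the paper states Corollary~\ref{corr_on_ordered_bases} with no proof at all, presenting it as an immediate consequence of Lemma~\ref{lemma_bijection_isomorphism}, and your argument follows that same intended route (apply the lemma to $V$ and to $W$ separately) while repairing the point where the ``immediate'' reading actually breaks down. Your key observation --- that the naive assignment $(\alpha,\beta)\mapsto(\beta^{-1}\alpha,\ \alpha^{-1}\beta)$ lands only on pairs with $g=f^{-1}$ --- is a genuine subtlety, not an artifact of your write-up. Indeed, the set of pairs of ordered bases is a simply transitive $\GL(V)\times\GL(W)$-set, whereas on pairs of isomorphisms the natural action $(a,b)\cdot(f,g)=(bfa^{-1},\ agb^{-1})$ has orbits indexed by the conjugacy class of $gf\in\GL(V)$; so in general the two sides are not even isomorphic as $\GL(V)\times\GL(W)$-sets, no equivariant (hence no canonical) bijection can exist, and any proof must break the symmetry by fixing reference data, exactly as you do with $\alpha_0,\beta_0$. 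After that, the identification of both sides with $\GL_\ell(\kk)\times\GL_\ell(\kk)$ is airtight, and it has the added virtue of working over an arbitrary field $\kk$, where one cannot fall back on counting cardinalities. One practical remark worth adding: because the bijection is non-canonical, when the corollary is invoked in the proof of Theorem~\ref{thm_gen_nilpotent_two_vs} to trade the ordered bases of $T[v]$ and $T'[fv]$ for the isomorphism blocks $S_1,S_2$, such a bijection must be fixed once and for all for each pair of equidimensional subspaces --- in the same way that proof fixes a complement $V_0^{\perp}$ for every subspace $V_0\subset V$; your formulation makes this dependence explicit, which the paper does not.
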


\subsection{Nilpotent pairs and balanced vectors}
\label{subsection_nilp_pairs_balanced_vec}
We continue to use the notations from Section~\ref{subsection_lin_alg}.
Consider $T=gf:V\lra V$ and $T'=fg:W\lra W$. We say the pair $(f,g)$ of linear maps is a \emph{nilpotent pair} if $T\in \mathcal{N}(V)$, equivalently, $T'\in \mathcal{N}(W)$. Denote the set of nilpotent pairs $(f,g)$ for $V,W$ by $\mathcal{N}(V,W)$. 

Let $v\in V$.
Form the subspace $T[v]$ of $V$, spanned by the linearly independent vectors $v,Tv,\dots, T^{a-1}(v)\not= 0$, with $T^{a}(v)=0$, and the subspace $T'[fv]$ of $W$ which is the span of linearly independent vectors $fv, T'fv,\ldots, (T')^{\ell-1}fv\not=0$, with $(T')^{\ell}fv=0$. 

There are two possible cases: 
\begin{itemize}
    \item We say that $v$ is \emph{balanced} if $a = \ell$, i.e., the spaces $T[v]$ and $T'[fv]$ have the same dimension, so that $f$ restricts to an isomorphism $T[v]\stackrel{f}{\lra} T'[fv]$. We call $a = \ell$ the \emph{length} of $v$. 
    \item The other case is $\ell=a-1$, so the second subspace has one dimension less than the first: $\dim T'[fv]=\dim T[v]-1$. We say that $v$ is \emph{unbalanced} in this case. 
\end{itemize}
Figure~\ref{fig_00009} shows the structure of images of $v$ under compositions of $f$ and $g$ when $v$ is balanced.
\begin{figure}
    \centering
\begin{tikzpicture}[scale=0.5,decoration={
    markings,
    mark=at position 0.5 with {\arrow{>}}}]


\begin{scope}[shift={(0,0)}]

\draw[thick] (-1,0) rectangle (5,6.5);

\node at (-2.5,5) {$V$};

\node at (-2.5,4.15) {$\dim m$};

\draw[thick,fill] (3.7,5.5) arc (0:360:2mm);

\node at (1.6,5.55) {$T^{\ell - 1}v$};

\node at (3.5,4) {$\vdots$};

\draw[thick,fill] (3.7,2.5) arc (0:360:2mm);

\node at (2.25,2.5) {$Tv$};

\draw[thick,fill] (3.7,1) arc (0:360:2mm);

\node at (2.5,1) {$v$};

\node at (6.4,7) {$0$};

\draw[thick,fill] (7.3,7) arc (0:360:2mm);

\node at (8.5,6.9) {$g$};

\draw[thick,<-] (7.5,7) -- (9,5.8);

\node at (6.0,6) {$f$};

\draw[thick,->] (4,5.5) -- (9,5.5);

\node at (7.5,4.90) {$g$};

\draw[thick,<-] (4,5) -- (9,4.3);

\node at (6.0,3) {$f$};

\draw[thick,->] (4,2.5) -- (9,2.5);

\draw[thick,->] (4,1) -- (9,1);

\node at (6.0,0.5) {$f$};

\node at (7.5,1.90) {$g$};

\draw[thick,<-] (4,2) -- (9,1.3);

\end{scope}


\begin{scope}[shift={(8,0)}]

\draw[thick] (0,0) rectangle (6,6.5);

\node at (7.5,5) {$W$};

\node at (7.5,4.15) {$\dim n$};

\draw[thick,fill] (1.7,5.5) arc (0:360:2mm);

\node at (3.6,5.6) {$(T')^{\ell - 1}w$};

\node at (1.5,4) {$\vdots$};

\draw[thick,fill] (1.7,2.5) arc (0:360:2mm);

\node at (2.9,2.5) {$T'w$};

\draw[thick,fill] (1.7,1) arc (0:360:2mm);

\node at (2.5,1) {$w$};

\end{scope}


\end{tikzpicture}
    \caption{Example of a balanced vector $v$ of length $\ell$ and its images under compositions of $f$ and $g$.}
    \label{fig_00009}
\end{figure}
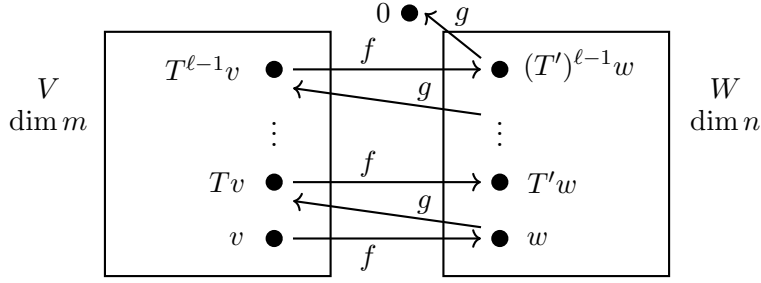

We have the following result, which is the analogue of~\cite[Theorem 5]{Lei21} for the quiver $\Gamma$ in Figure~\ref{fig_00010}:

\begin{theorem}
\label{thm_gen_nilpotent_two_vs}
There is a bijection 
\[
\{ (f, g, v): gf\in \mathcal{N}(V), v \mbox{ is balanced} \} 
\overset{\simeq}{\lra}
\Hom(V,W)\times \Hom(W,V).
\]
\end{theorem}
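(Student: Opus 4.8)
The plan is to mimic Leinster's argument \cite{Lei21}, replacing his single operator by the composition $T=gf$ (equivalently $T'=fg$) and his arbitrary vector by a \emph{balanced} vector. I would exhibit an explicit forward map $\Phi$ from the left-hand set of triples to the right-hand set of pairs, build a candidate inverse $\Psi$ out of the Fitting data supplied by Lemma~\ref{lemma_characterization_of_pairs}, and then check that $\Phi$ and $\Psi$ are mutually inverse. The geometric idea for $\Phi$ is to \emph{close the balanced tower of $v$ into an invertible orbit}. Given $(f,g,v)$ with $T$ nilpotent and $v$ balanced of length $\ell$, the tower $T[v]=\langle v,Tv,\dots,T^{\ell-1}v\rangle$ and its image tower $T'[fv]=\langle fv,\dots,(T')^{\ell-1}fv\rangle$ have equal dimension, and $f$ restricts to an isomorphism between them. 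The top vector satisfies $(T')^{\ell-1}fv=fT^{\ell-1}v$, which $g$ sends to $T^{\ell}v=0$. I would modify $g$ so that this top vector is reconnected to the bottom of the $V$-tower, turning the composite along the tower from the nilpotent shift (ending at $0$) into an invertible cyclic shift $v\mapsto Tv\mapsto\cdots\mapsto T^{\ell-1}v\mapsto v$. To make this reconnection canonical, without choosing a complement to the top line, I would realize $g$ as a complement of $V$ in $W\oplus V$ via Lemma~\ref{lemma_canonical_complement} and replace it by the complement dictated by $v$ and the tower, obtaining $\hat g$; set $\Phi(f,g,v)=(f,\hat g)$, a pair whose composite now carries a nontrivial invertible part.

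For the inverse $\Psi$, given an arbitrary pair $(f,g)$ I would apply Lemma~\ref{lemma_characterization_of_pairs} to split $V=V_I\oplus V_N$ and $W=W_I\oplus W_N$ with $(gf)|_{V_I}$, $(fg)|_{W_I}$ automorphisms and $(gf)|_{V_N}$, $(fg)|_{W_N}$ nilpotent; Lemma~\ref{Isomorphisms} then gives $\dim V_I=\dim W_I$. On the invertible block I would reverse the closing operation: cut one cyclic orbit so as to turn the automorphism into a nilpotent shift, which releases a balanced vector $v$, while the surviving invertible data is recorded inside the now globally nilpotent pair $(\tilde f,\tilde g)$, glued to the genuinely nilpotent block $(V_N,W_N)$ through the Fitting decomposition. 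The resulting triple $\Psi(f,g)=(\tilde f,\tilde g,v)$ should have $\tilde g\tilde f$ nilpotent and $v$ balanced of length equal to the cut orbit.

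The hard part is that the invertible block $(gf)|_{V_I}$ need not be cyclic, so a single balanced vector cannot by itself encode the whole automorphism: its tower accounts for only one cyclic orbit of size $\ell\le\dim V_I$, and a naive reconnection with ``coefficient one'' would produce only very special invertible parts. The remaining, non-cyclic part of the invertible action must therefore be absorbed into the nilpotent pair through the reconnection prescribed by the complement lemma and glued across the Fitting splitting. This is precisely the linear analogue of Joyal's fundamental bijection — a permutation traded for a linearly ordered structure marked by a single element — that underlies Leinster's count, and checking that the complement-lemma reconnection really sweeps out \emph{all} invertible parts (arbitrary eigenvalues, arbitrary Jordan-type) is where I expect the main obstacle to lie. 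I would establish bijectivity by verifying $\Phi$ and $\Psi$ are mutually inverse: closing then opening returns the original tower because the balanced condition matches the two towers dimension-for-dimension, and opening then closing restores the invertible block because cutting and reconnecting one orbit are inverse operations. The one delicate point is showing that the balanced condition exactly cuts out the image of $\Phi$ — equivalently, that admitting an \emph{unbalanced} $v$ (where $\dim T'[fv]=\dim T[v]-1$) would only reproduce pairs already counted — which I would track by comparing the tower dimensions against the invertible dimensions on the two sides. A clean alternative, if the direct verification becomes unwieldy, is induction on $\dim V+\dim W$: peel off the orbit generated by $v$, reduce to the complementary pair, and invoke the statement in lower dimension, using functoriality under isomorphisms (Lemma~\ref{lemma_bijection_isomorphism}) to reduce to canonical forms.
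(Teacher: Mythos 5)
Your skeleton (a forward map built from the towers of a balanced vector, an inverse built from the Fitting data of Lemma~\ref{lemma_characterization_of_pairs}, then a mutual-inverse check) is the same as the paper's, and you correctly locate the crux: the invertible Fitting block of an arbitrary pair need not be cyclic. But your concrete forward map $\Phi$ does not survive that crux. Closing the tower into a cyclic shift produces only pairs $(f,\hat g)$ whose invertible part is, in the tower bases, the pair (identity, cyclic shift), so that $(\hat g f)|_{V_I}$ has characteristic polynomial $x^\ell-1$; a general pair realizes an \emph{arbitrary} pair of isomorphisms $(S_1,S_2)$ between $V_I$ and $W_I$, hence an arbitrary automorphism $S_2S_1$. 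So $\Phi$ misses almost all of $\Hom(V,W)\times\Hom(W,V)$. It also fails injectivity: if $\ell\ge 2$, the $\ell$ ``rotations'' of the closed orbit (cut the cycle at $fv$, at $fTv$, \dots) are distinct balanced triples that close to the same pair $(f,\hat g)$ --- the same basepoint-forgetting overcounting that makes cyclic closing yield $(n-1)!$ rather than $n!$ in the set-theoretic setting. Finally, the patch you propose --- absorbing the ``remaining, non-cyclic part of the invertible action'' into the nilpotent pair --- is impossible precisely \emph{because} of Lemma~\ref{lemma_characterization_of_pairs}: every invertible direction of $gf$ lies in $V_I$, so the nilpotent constituent cannot carry any leftover automorphism data; the same objection breaks your inverse $\Psi$ (cutting one orbit of a non-cyclic automorphism does not leave a nilpotent remainder).

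The missing idea, and what the paper actually does, is that the balanced vector is never converted into an automorphism at all. The tower data is a \emph{pair of ordered bases}, of $T[v]$ and of $T'[fv]$ (in which $f$ is the identity matrix and $g$ is the Jordan block $\J_\ell$), and the trade is the torsor bijection of Corollary~\ref{corr_on_ordered_bases} (via Lemma~\ref{lemma_bijection_isomorphism}): pairs of ordered bases correspond to pairs of \emph{arbitrary} isomorphisms $(S_1,S_2)$. Separately, the off-diagonal blocks $A,B$ of $f,g$ relative to the fixed complements are traded, through Lemma~\ref{lemma_canonical_complement} applied to $S_2A$ and $S_1B$, for new choices of complements of $T[v]$ and $T'[fv]$; then $(f',g')$ is defined block-diagonally, with $(S_1,S_2)$ on the towers and the residual nilpotent pair $(N_1,N_2)$ on the new complements. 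That is exactly Fitting data, so Lemma~\ref{lemma_characterization_of_pairs} finishes the bijection, and all invertible parts are swept out because $(S_1,S_2)$ is unconstrained. This is the honest linear analogue of Joyal's order-for-permutation trade that you invoke: even in Joyal's proof the trade is \emph{not} implemented by closing the linear order into a cycle; it uses only that linear orders and permutations are equinumerous. To repair your argument, replace ``reconnect the top of the tower to the bottom'' by ``replace the identity/Jordan blocks on the towers by an arbitrary isomorphism pair $(S_1,S_2)$''; the rest of your outline then goes through.
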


\begin{proof} At first, for each subspace $V_0\subset V$ pick a complementary subspace $V_0^{\perp}\subset V$ such that $V=V_0+V_0^{\perp}$ and $V_0\cap V_0^{\perp}=0$. Likewise, pick a complement $W_0^{\perp}$ for each subspace $W_0\subset W$.

Let $f, g$ be a nilpotent pair with a balanced vector $v$ of length $\ell$. We have subspaces $T[v]$ and $T'[fv]$ of the same dimension, each equipped with ordered bases $(v, Tv, \ldots, T^{\ell - 1}v)$ and $(fv, T'fv, \ldots, (T')^{\ell - 1} fv)$, respectively. In these bases, $f|_{T[v]}$ is the identity matrix and $g|_{T'[fv]}$ is a nilpotent Jordan block of size $\ell$, i.e.,
    $(J_{\ell})_{ij} = \delta_{i, j+1}, \text{ indices taken modulo }\ell$, by construction.

    We have complements $T[v]^{\perp}$ and $T'[fv]^{\perp}$ of $T[v]$ and $T'[fv]$ in $V$ and $W$, respectively, see the beginning of the proof. With respect to these subspaces, $f$ and $g$ are given by:
    \begin{equation}
    \label{eqn_f_g_nilp_pair}
    f = 
    \kbordermatrix{
                   &  T[v] & T[v]^{\perp}  \\ 
    T'[fv]         &  \I_{\ell}  & A \\
    T'[fv]^{\perp} &  0    & f|_{T[v]^{\perp}}
    }, 
    \quad\quad 
    g = \kbordermatrix{
                & T'[fv] & T'[fv]^{\perp} \\ 
    T[v]        & \J_{\ell}  & B \\
    T[v]^{\perp}&     0  & g|_{T'[fv]^{\perp}}
    },
    \end{equation}
    where $\I_{\ell}$ is the $\ell\times \ell$ identity matrix, $\J_{\ell}$ is the $\ell\times \ell$ Jordan block with $1$ immediately below the diagonal, $A$ is an $\ell\times (m-\ell)$ block, and $B$ is an $\ell\times (n-\ell)$ block.
    Since 
    \[ 
    gf = 
    \begin{pmatrix}
    C & D \\ 
    0 & E \\ 
    \end{pmatrix}
    \] where $C,D,E$ are blocks of appropriate size and $gf\in \mathcal{N}(V)$ if and only if $C\in\mathcal{N}(T[v])$ and $E\in\mathcal{N}(T[v]^{\perp})$, we see that 
    $(f|_{T[v]^{\perp}}, g|_{T'[fv]^{\perp}})$ is a nilpotent pair.
    Now, the data of subspaces with ordered bases is equivalent to a pair of isomorphisms $S_1 : T[v] \to T'[fv]$ and $S_2 : T'[fv] \to T[v]$ by Corollary~\ref{corr_on_ordered_bases}. 
    We replace the upper-left blocks of~\eqref{eqn_f_g_nilp_pair} with $S_1$ and $S_2$ since the original blocks gave the data of ordered bases of $T[v]$ and $T'[fv]$, and we are simply replacing them with their corresponding isomorphisms. 

There are many choices of complement for $T[v]$ and $T'[fv]$ (we fixed one choice). In order to resolve this, we can apply Lemma~\ref{lemma_canonical_complement}. The four relevant maps are \[
    A:T[v]^{\perp}\lra T'[fv], \ \ S_1:T[v]\lra T'[fv], \ \  S_2 : T'[fv] \to T[v], \ \ B:T'[fv]^{\perp}\lra T[v].
    \]
    We observe that $S_2 A \in \Hom(T[v]^\perp, T[v])$ corresponds exactly to a choice of complement for $T[v]$ in $V$. Likewise, $S_1 B \in \Hom(T'[fv]^\perp, T'[fv])$ corresponds exactly to a choice of complement for $T'[fv]$ in $W$. 
    Form new complementary subspaces of $T[v]$ and $T'[fv]$ using maps $S_2A$ and $S_1B$, respectively.  Define $f',g'$ to be given by the following block-diagonal matrices relative to these subspaces: 
    \[ 
    f' = \begin{pmatrix}
        S_1 & 0 \\
        0 & N_1
    \end{pmatrix}, \quad\quad 
    g' = \begin{pmatrix}
        S_2 & 0 \\
        0 & N_2
    \end{pmatrix},
    \]
    where 
    \[
    N_1 = f|_{T[v]^{\perp}}, \ \ 
    N_2 = g|_{T'[fv]^{\perp}}, 
    \]
    see \eqref{eqn_f_g_nilp_pair}. 
    This is also the Fitting decomposition of $(f',g')$.
     $N_i$ compose to nilpotent operators, and $S_i$ are isomorphisms. By Lemma~\ref{lemma_characterization_of_pairs}, we have obtained a unique pair of linear maps
    \[  (f', g') \in \Hom(V, W) \times \Hom(W, V). 
    \]
This concludes the isomorphism.
\end{proof}

\begin{corollary}\label{cor_averages_bal}
    The average number of balanced vectors in $V$ for a random nilpotent pair equals the average number of balanced vectors in $W$. 
\end{corollary}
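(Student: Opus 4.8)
The plan is to count, in two ways, the triples consisting of a nilpotent pair together with a balanced vector, and then to exploit the symmetry of the whole construction under interchanging the roles of $V$ and $W$. Throughout I work over the finite field $\kk = \F_q$, so that ``random'' and ``average'' make sense; write $m = \dim V$, $n = \dim W$, and let $N = |\mathcal{N}(V,W)|$ be the number of nilpotent pairs.

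First I would record what Theorem~\ref{thm_gen_nilpotent_two_vs} gives numerically. The bijection identifies the set of triples $(f,g,v)$ with $(f,g) \in \mathcal{N}(V,W)$ and $v \in V$ balanced with $\Hom(V,W) \times \Hom(W,V)$, whose cardinality is $q^{mn}\cdot q^{mn} = q^{2mn}$. Counting the left-hand set by first summing over pairs, if $B_V(f,g)$ denotes the number of balanced vectors in $V$ for the pair $(f,g)$, then $\sum_{(f,g) \in \mathcal{N}(V,W)} B_V(f,g) = q^{2mn}$. Dividing by $N$, the average number of balanced vectors in $V$ over a random nilpotent pair equals $q^{2mn}/N$.

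The key step is to observe that the entire construction is symmetric under the interchange $(V,f) \leftrightarrow (W,g)$. The nilpotency condition is symmetric, since $gf \in \mathcal{N}(V)$ if and only if $fg \in \mathcal{N}(W)$, so the set $\mathcal{N}(V,W)$ is unchanged under the swap; and the notion of a balanced vector $w \in W$ is by definition the image of the notion of a balanced vector in $V$ under this interchange, formed from $T' = fg$ and from $T = gf$ applied to $gw$. Applying Theorem~\ref{thm_gen_nilpotent_two_vs} with $V$ and $W$ interchanged therefore yields a bijection between the triples $(f,g,w)$ with $(f,g)$ nilpotent and $w \in W$ balanced, and $\Hom(W,V) \times \Hom(V,W)$, again of cardinality $q^{2mn}$. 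Hence $\sum_{(f,g)} B_W(f,g) = q^{2mn}$, and the average number of balanced vectors in $W$ is likewise $q^{2mn}/N$, so the two averages coincide.

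The argument is essentially free of computation once the symmetry is in place, so the only real point to watch is the symmetry step itself: one must check that ``nilpotent pair'' and ``balanced vector'' are genuinely exchanged, not merely analogous, under swapping the two spaces and the two maps, so that Theorem~\ref{thm_gen_nilpotent_two_vs} may be invoked verbatim for $W$ rather than reproved. Since both structures are manifestly symmetric in $(V,f)$ and $(W,g)$, this presents no difficulty, and notably no closed formula for $N$ is needed: the common value of the two averages is $q^{2mn}/N$, and the asserted equality holds independently of whatever $N$ turns out to be.
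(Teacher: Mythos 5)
Your proof is correct and follows essentially the same route as the paper: the paper's own justification is precisely that Theorem~\ref{thm_gen_nilpotent_two_vs} holds verbatim with a balanced $v\in V$ replaced by a balanced $w\in W$, by the symmetry between $(V,f)$ and $(W,g)$, so both total counts of balanced triples equal $|\Hom(V,W)\times\Hom(W,V)|$. Your version merely makes the counting explicit (both averages equal $q^{2mn}/|\mathcal{N}(V,W)|$), which the paper leaves implicit.
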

This follows due to the symmetry between $V$ and $W$. Theorem~\ref{thm_gen_nilpotent_two_vs} holds with a balanced $v\in V$ replaced by a balanced $w\in W$. 

\subsection{Nilpotent pairs over a finite field}
\label{subsection_nilp_pairs_bal_finite_field}

Now, let $V$ and $W$ be $m$ and $n$-dimensional vector spaces, respectively, over the finite field $\mathbb{F}_q$, where $q$ is a prime power.
We would like to determine the number of nilpotent pairs $(f,g)$, i.e., $fg\in \mathcal{N}(W)$, given $m,n$. 
Define the $q$-binomial coefficient
\begin{equation}
\label{eqn_q_binom_coeff}
\qbinom{m}{r}{q} := \prod_{i=0}^{r-1}(q^m - q^i)\Big/\prod_{i=0}^{r-1}(q^r - q^i), 
\end{equation}
where if $r=0$, then $\prod_{i=0}^{r-1}(q^m - q^i):= 1$, and likewise for the denominator.
This is the number of $r$-dimensional subspaces of an $m$-dimensional $\mathbb{F}_q$-vector space.
\begin{lemma}
\label{lemma_no_rank_r_linear_maps}
    For $0\le r\le \min\{m,n\}$, the number of rank $r$ linear maps $f:V\rightarrow W$ is 
    \begin{equation}\label{eq_Nmnr}
    \mcN_{m,n;r} \ = \ \qbinom{m}{r}{q} \qbinom{n}{r}{q} \prod_{i=0}^{r-1}(q^r - q^i).
    \end{equation}
\end{lemma}
\begin{proof} 
The image of $f$ is an $r$-dimensional subspace of $W$, for which there are $\qbinom{n}{r}{q}$ choices. The number of surjective linear maps $V\lra \im(f)$ is the product of the number of codimension $r$ subspaces of $V$ (for $\ker f$; there are $\qbinom{m}{m-r}{q} = \qbinom{m}{r}{q}$ choices) and the size of $\GL(r,\F_q)$, given by the product term on the RHS of \eqref{eq_Nmnr}.  
\end{proof}

Denote by $\mathcal{N}(V,W)$ the set of nilpotent pairs $(f,g)$ as above ($f:V\lra W$ and $g:W\lra V$) and by $\mathcal{N}_{m,n}=|\mathcal{N}(V,W)|$ the cardinality of that set. 

\begin{theorem}
\label{thm_nilpotent_pairs_fin_field}
    The number of nilpotent pairs is given by the formula
    \begin{equation}
\label{eqn_nilpotent_pairs_Fq}
		\mathcal{N}_{m,n}=\sum_{r=0}^{\min\{m,n\}}   q^{mn-r} \mcN_{m,n;r}.
    \end{equation} 
\end{theorem}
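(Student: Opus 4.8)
The plan is to stratify the set $\mcN(V,W)$ by the rank of the first map $f$ and to prove the uniform count that, for \emph{every} $f$ of rank $r$, there are exactly $q^{mn-r}$ maps $g$ making $(f,g)$ a nilpotent pair. Since there are $\mcN_{m,n;r}$ maps of rank $r$ by Lemma~\ref{lemma_no_rank_r_linear_maps}, summing $q^{mn-r}$ over all such $f$ and over all $r$ from $0$ to $\min\{m,n\}$ immediately gives \eqref{eqn_nilpotent_pairs_Fq}. So the entire theorem reduces to this one counting claim.

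First I would fix $f:V\to W$ of rank $r$ and choose bases adapted to $f$. Writing $V=V'\oplus K$ with $K=\ker f$ of dimension $m-r$, and $W=I\oplus I^{\perp}$ with $I=\im f$ of dimension $r$, one may pick ordered bases so that $f|_{V'}:V'\to I$ is the identity and $f|_{K}=0$, i.e. $f=\left(\begin{smallmatrix}\I_r & 0\\ 0 & 0\end{smallmatrix}\right)$ in block form. Decomposing $g:W\to V$ accordingly into blocks $g_{11}:I\to V'$, $g_{12}:I^{\perp}\to V'$, $g_{21}:I\to K$, $g_{22}:I^{\perp}\to K$, a direct multiplication yields
\[
gf=\begin{pmatrix} g_{11} & 0 \\ g_{21} & 0 \end{pmatrix},
\]
an endomorphism of $V=V'\oplus K$ with top-left block of size $r\times r$.

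The key step is to observe that this operator is nilpotent if and only if the single block $g_{11}$ is nilpotent. Indeed, $x\I_m-gf$ is block lower triangular with diagonal blocks $x\I_r-g_{11}$ and $x\I_{m-r}$, so its characteristic polynomial factors as $x^{m-r}\det(x\I_r-g_{11})$. Over a field an operator is nilpotent precisely when its characteristic polynomial is a power of $x$ (by Cayley--Hamilton), whence $gf$ is nilpotent if and only if $\det(x\I_r-g_{11})=x^{r}$, i.e. if and only if $g_{11}$ is nilpotent. This is the \emph{only} constraint on $g$; the blocks $g_{12},g_{21},g_{22}$ remain free.

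It then remains to count. The number of nilpotent $r\times r$ matrices over $\F_q$ is $q^{r(r-1)}$ by Fine--Herstein~\cite{FH58} (equivalently, the specialization of Leinster's bijection~\cite{Lei21}), while the three free blocks contribute $q^{r(n-r)}\,q^{(m-r)r}\,q^{(m-r)(n-r)}$; the exponents sum to $mn-r$, giving exactly $q^{mn-r}$ admissible $g$. Finally I would note that this count depends only on $r=\rank f$: any two rank-$r$ maps are related by $f\mapsto PfQ^{-1}$ for $P\in\GL(W)$, $Q\in\GL(V)$, and $g\mapsto QgP^{-1}$ carries nilpotent pairs to nilpotent pairs since $\tilde{g}\tilde{f}=Q(gf)Q^{-1}$. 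Summing over the $\mcN_{m,n;r}$ rank-$r$ maps and over $r$ completes the proof. The main obstacle is the characteristic-polynomial argument reducing nilpotency of $gf$ to nilpotency of the lone block $g_{11}$; once that reduction is secured, the enumeration is a routine exponent calculation.
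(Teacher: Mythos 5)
Your proposal is correct and follows essentially the same route as the paper's proof: stratify by $\rank f$, choose bases adapted to $\ker f$ and $\im f$ so that $f$ becomes $\left(\begin{smallmatrix}\I_r & 0\\ 0 & 0\end{smallmatrix}\right)$, observe that nilpotency of the composition reduces to nilpotency of a single $r\times r$ block counted by Leinster/Fine--Herstein as $q^{r(r-1)}$, and sum $q^{mn-r}$ against $\mcN_{m,n;r}$. The only cosmetic differences are that you work with $gf$ where the paper works with $fg$ (equivalent by definition of a nilpotent pair), and you spell out the block-triangular nilpotency step via the characteristic polynomial, which the paper simply asserts.
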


\begin{proof}
    Given a rank $r$ linear map $f:V\rightarrow W$, we count the number of maps $g$ such that $fg$ is nilpotent.
    
    Without loss of generality, let $\{v_{r+1},\ldots,v_m\}$ be a basis for $\ker f\subset V$. Extend it to a basis $\{ v_1,\ldots, v_m\}$ of $V$. Since $f$ is linear, $\{f(v_1),\ldots,f(v_r)\}$ forms a basis of $\im f\subset W$. Extend it to a basis $\{w_1=f(v_1),\ldots, w_r=f(v_r),w_{r+1},\ldots, w_{n}\}$ of $W$.
        
        With respect to these two chosen bases for $V$ and $W$, the $n\times m$ matrix representation of $f$ is 
        \[ 
        F = \kbordermatrix{ 
            & r    & m-r \\ 
        r   & \I_r & 0 \\ 
        n-r & 0    & 0 
        },
        \]
        where $\I_r$ is the $r\times r$ identity matrix.
        Now let 
        \[
        G = 
        \kbordermatrix{
            & r & n-r \\ 
        r   & A & B \\ 
        m-r & C & D 
        }
        \] 
        be the matrix representation of $g$, where $A$ is $r \times r$, $B$ is $r \times (n-r)$, $C$ is $(m-r) \times r$, and $D$ is $(m-r) \times (n-r)$ matrices.
        The matrix representation for the composition $fg$ is the product:
        \[
        FG = \begin{pmatrix} \I_r & 0 \\ 0 & 0 \end{pmatrix} \begin{pmatrix} A & B \\ C & D \end{pmatrix} = \begin{pmatrix} A & B \\ 0 & 0 \end{pmatrix},
        \]
        which is a block upper-triangular matrix. Such a matrix is nilpotent if and only if its diagonal blocks are nilpotent. In our case, $A$ must be nilpotent. By ~\cite[Theorem 5]{Lei21}, the number of such $r\times r$ nilpotent matrices $A$ over $\mathbb{F}_q$ is $q^{r(r-1)}$. For blocks $B,C,D$, there are no restrictions. So the total number of such matrices $G$ (and thus maps $g$) for a fixed $f$ of rank $r$ is:
            $$ 
            q^{r(r-1)}  q^{r(n-r)}  q^{r(m-r)}  q^{(m-r)(n-r)} = q^{mn - r}. 
            $$
    Hence, for each $f$ of rank $r$, the same number of linear maps $g$ exists, such that $fg$ is nilpotent. 

    Now, by Lemma~\ref{lemma_no_rank_r_linear_maps}, we know the number of rank $r$ linear maps between $V$ and $W$. Summing over the rank of the linear map $f$, the result follows.
\end{proof}

\begin{proposition}
Equation~\eqref{eqn_nilpotent_pairs_Fq} can also be written as   
\[
\mcN_{m,n;r} = \sum_{T\in \Hom(V,W)} q^{mn-\rank(T)}.
\]
\end{proposition}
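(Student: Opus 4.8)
The plan is to recognize the right-hand side as nothing more than a regrouping of the sum in \eqref{eqn_nilpotent_pairs_Fq} according to the rank of $T$. First I would observe that every $T\in\Hom(V,W)$ has a well-defined rank $\rank(T)\in\{0,1,\ldots,\min\{m,n\}\}$, so that $\Hom(V,W)$ partitions into the fibers of the rank function. Writing the sum over all $T$ as an iterated sum, outer over $r$ and inner over those $T$ with $\rank(T)=r$, the exponent $mn-\rank(T)$ is constant on each fiber, equal to $mn-r$, and therefore factors out of the inner sum.

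Next I would invoke Lemma~\ref{lemma_no_rank_r_linear_maps}, which records that the number of $T\in\Hom(V,W)$ of rank exactly $r$ is $\mcN_{m,n;r}$. Combining these two observations gives $\sum_{T\in\Hom(V,W)}q^{mn-\rank(T)}=\sum_{r=0}^{\min\{m,n\}}\mcN_{m,n;r}\,q^{mn-r}$, which is precisely the right-hand side of \eqref{eqn_nilpotent_pairs_Fq}. Comparing with Theorem~\ref{thm_nilpotent_pairs_fin_field} then identifies both expressions with the total count $\mathcal{N}_{m,n}$.

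There is essentially no obstacle here: the identity is a purely formal fiberwise (``layer-cake'') reorganization of a finite sum, and all the arithmetic content is already carried by the rank distribution supplied in Lemma~\ref{lemma_no_rank_r_linear_maps}. The only points requiring care are bookkeeping the range $0\le r\le\min\{m,n\}$ so that the two sums share the same index set, and noting that the left-hand side of the displayed identity should read $\mathcal{N}_{m,n}$ rather than $\mcN_{m,n;r}$, since the right-hand side has been summed over $T$ and carries no free index $r$.
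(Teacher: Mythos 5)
Your proof is correct and takes essentially the same route as the paper: both arguments simply regroup the sum over $\Hom(V,W)$ into fibers of the rank function and invoke Lemma~\ref{lemma_no_rank_r_linear_maps} to count each fiber, the exponent $q^{mn-r}$ being constant on it. Your observation that the displayed left-hand side should read $\mathcal{N}_{m,n}$ rather than $\mcN_{m,n;r}$ is also correct, since the right-hand side carries no free index $r$.
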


\begin{proof}
The number of rank $r$ linear maps $T:V\lra W$ is given by the expression in Lemma~\ref{lemma_no_rank_r_linear_maps}. 
In \eqref{eqn_nilpotent_pairs_Fq} each map is weighted with the coefficient $q^{mn - r}$.
\end{proof}

The following lemma is straightforward to establish. 
\begin{lemma}
    Let $(f,g)$ be a nilpotent pair. 
    \begin{enumerate}
    \item If $v\in V$, respectively $w\in W$, is balanced then every vector in the subspace $T[v]\subset V$, respectively in $T'[w]\subset W$, is balanced. 
    \item  If $v\in V$ is unbalanced then every vector in  $T[v]\setminus\{0\}$ is unbalanced. If $w\in W$ is unbalanced then every vector in $T'[w]\setminus \{0\}$ is unbalanced. 
    \item If $v\in V$ is balanced and $f(v)\not=0$ then $f(v)$ is unbalanced. If $w\in W$ is balanced and $g(w)\not=0$, then $g(w)$ is unbalanced. 
    \item If $v\in V$ is unbalanced, then $f(v)\in W$ is balanced. If $w\in W$ is unbalanced, then $g(w)\in V$ is balanced. 
    \end{enumerate}
\end{lemma}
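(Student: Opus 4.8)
The plan is to reduce all four statements to a single clean balancedness criterion coming from the intertwining of $T$ and $T'$. Since $T'f = (fg)f = f(gf) = fT$, induction gives $(T')^k f = fT^k$, hence $(T')^k(fv) = f(T^k v)$ for all $k\ge 0$ and all $v\in V$; so the $T'$-chain of $fv$ is exactly the $f$-image of the $T$-chain of $v$. Writing $a = \dim T[v]$ (so $T^{a-1}v\ne 0$ and $T^a v = 0$), I would first record that $f(T^k v)\ne 0$ for $0\le k\le a-2$: otherwise $T^{k+1}v = g\,f(T^k v)=0$ with $k+1\le a-1$, contradicting the definition of $a$. Thus only $f(T^{a-1}v)$ can vanish, which yields the criterion I will use throughout: $v$ is \emph{balanced} precisely when $f(T^{a-1}v)\ne 0$, and \emph{unbalanced} precisely when $f(T^{a-1}v)=0$. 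Running the same argument with the roles of $(f,g,T,T',V,W)$ interchanged gives the mirror criterion for $w\in W$: with $b=\dim T'[w]$, $w$ is balanced iff $g\bigl((T')^{b-1}w\bigr)\ne 0$.

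For parts (1) and (2), I would take any nonzero $u\in T[v]$ and expand $u=\sum_{i=j}^{a-1}c_i\,T^i v$ with $c_j\ne 0$, where $j$ is the least index occurring. Using $T^a v = 0$, a direct computation shows $T^{a-1-j}u = c_j\,T^{a-1}v\ne 0$ while $T^{a-j}u=0$, so $\dim T[u]=a-j$ and the top of the $T$-chain of $u$ is the nonzero scalar multiple $c_j\,T^{a-1}v$ of the top of the chain of $v$. By the criterion, $u$ is balanced iff $f(c_j T^{a-1}v)=c_j\,f(T^{a-1}v)\ne 0$, which holds iff $f(T^{a-1}v)\ne 0$, i.e. iff $v$ is balanced. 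This settles (1) and (2) simultaneously (the zero vector being balanced trivially, both its chains being $0$-dimensional); the two $W$-statements follow by symmetry.

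For parts (3) and (4), I would apply the mirror criterion to $w=f(v)$. Here $b=\dim T'[fv]$ equals $a$ when $v$ is balanced and $a-1$ when $v$ is unbalanced, and in either case the relevant top is $(T')^{b-1}(fv)=f(T^{b-1}v)$. In the balanced case $g\bigl((T')^{b-1}fv\bigr)=g\,f(T^{a-1}v)=T^{a}v=0$, so $f(v)$ is unbalanced, giving (3). In the unbalanced case with $a\ge 2$ (so $fv\ne 0$) one gets $g\bigl((T')^{b-1}fv\bigr)=g\,f(T^{a-2}v)=T^{a-1}v\ne 0$, so $f(v)$ is balanced, giving (4); when $a=1$ the unbalanced condition forces $fv=0$, which is balanced by convention. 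The symmetric halves of (3) and (4) follow by exchanging $V$ and $W$.

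All computations are elementary, so the work is mostly bookkeeping. The two places needing genuine care are (i) establishing the balancedness criterion, where one must rule out premature vanishing of $f(T^k v)$ for $k<a-1$, and (ii) tracking the exact chain length and top element of a general $u\in T[v]$ in parts (1)--(2); the degenerate case $f(v)=0$ in part (4) is the single point where the convention that the zero vector is balanced must be invoked explicitly.
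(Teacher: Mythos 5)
Your proof is correct. Note that the paper itself offers no argument here --- it introduces the lemma with ``The following lemma is straightforward to establish'' and moves on --- so there is no proof to compare against; what you have written supplies the missing details. Your strategy of deriving everything from the single criterion ``for $v\neq 0$ with $a=\dim T[v]$, $v$ is balanced iff $f(T^{a-1}v)\neq 0$'' (and its mirror for $w\in W$) is a clean way to organize the four parts: the intertwining $T'f=fT$ identifies the $T'$-chain of $fv$ with the $f$-image of the $T$-chain of $v$, the non-vanishing of $f(T^kv)$ for $k\le a-2$ pins down $\ell\in\{a-1,a\}$, and then (1)--(2) reduce to computing the top of the chain of a general $u\in T[v]$ while (3)--(4) reduce to evaluating $g$ on the top of the chain of $fv$. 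As a bonus, your criterion argument also proves the dichotomy that the paper asserts without proof in Section~\ref{subsection_nilp_pairs_balanced_vec}, namely that $\ell=a$ and $\ell=a-1$ are the only possible cases. The degenerate cases are handled correctly: the zero vector is balanced (chains of dimension $0$ on both sides), and in part (4) the case $a=1$, where unbalancedness forces $f(v)=0$, is exactly where that convention is needed.
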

In each of the three corollaries below, it is assumed that a nilpotent pair $(f,g)$ is given. 
\begin{corollary} \hfill 
\begin{enumerate}
\item 
    The set of balanced vectors for $(f,g)$ in $V$ is a union of $T$-stable linear subspaces of $V$. Likewise for balanced vectors in $W$. 
    \item 
    The set of unbalanced vectors for $(f,g)$ in $V$ union with the zero vector is a union of $T$-stable linear subspaces of $V$. Likewise for unbalanced vectors in $W$. 
    \end{enumerate}
\end{corollary}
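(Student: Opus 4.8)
The plan is to derive both parts of the corollary directly from parts (1) and (2) of the preceding Lemma, the essential observation being that each cyclic subspace $T[v]$ is automatically $T$-stable. Indeed, for any $v\in V$ the space $T[v]=\Span\{v,Tv,\dots,T^{a-1}v\}$ (with $T^a v=0$) is sent into itself by $T$, since $T$ carries each spanning vector $T^i v$ to $T^{i+1}v$ and sends $T^{a-1}v$ to $T^a v=0\in T[v]$. The same remark applies to $T'[w]\subseteq W$ with $T'=fg$. Thus all the geometric content has already been packaged in the Lemma, and the corollary is just the bookkeeping that turns pointwise statements into a statement about unions of subspaces.

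For part (1), write $V_{\bal}$ for the set of balanced vectors of $V$. By part (1) of the Lemma, whenever $v$ is balanced the entire subspace $T[v]$ consists of balanced vectors, so $T[v]\subseteq V_{\bal}$. Since $v\in T[v]$ for every $v$, I would conclude $V_{\bal}=\bigcup_{v\in V_{\bal}}T[v]$, which exhibits $V_{\bal}$ as a union of $T$-stable linear subspaces; the statement for $W$ is identical with $T'$ in place of $T$.

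For part (2), write $V_{\unbal}$ for the set of unbalanced vectors. By part (2) of the Lemma, if $v$ is unbalanced then every vector of $T[v]\setminus\{0\}$ is unbalanced, hence $T[v]\subseteq V_{\unbal}\cup\{0\}$. Because $v\in T[v]$ and $0\in T[v]$ for each $v$, I get $V_{\unbal}\cup\{0\}=\bigcup_{v\in V_{\unbal}}T[v]$, again a union of $T$-stable subspaces, and similarly for $W$ using $T'$.

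There is no genuinely hard step here, since the local content is supplied entirely by the Lemma; the only point requiring care is the role of the zero vector. Under the length convention $a=\ell=0$ the zero vector counts as balanced, so $V_{\unbal}$ by itself is not a subspace and one must adjoin $\{0\}$ in part (2). I would double-check that the resulting union is exactly $V_{\unbal}\cup\{0\}$ --- neither smaller (every unbalanced $v$ lies in $T[v]$, and $0\in T[v]$) nor larger (each $T[v]$ attached to an unbalanced $v$ meets $V_{\bal}$ only in $0$) --- which is precisely what parts (1) and (2) of the Lemma guarantee.
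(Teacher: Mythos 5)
Your proposal is correct and matches the paper's (implicit) argument: the paper states this corollary without proof as an immediate consequence of the preceding lemma, and your write-up supplies exactly the intended details --- each $T[v]$ is a $T$-stable subspace, and parts (1) and (2) of the lemma show that $V_{\bal}=\bigcup_{v\in V_{\bal}}T[v]$ and $V_{\unbal}\cup\{0\}=\bigcup_{v\in V_{\unbal}}T[v]$, with the zero vector (balanced, of length $0$) correctly identified as the reason for adjoining $\{0\}$ in part (2). The only cosmetic gap is the degenerate case $V_{\unbal}=\emptyset$, where your union is empty rather than $\{0\}$; this is repaired by also including the $T$-stable subspace $T[0]=\{0\}$ in the union.
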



\begin{corollary} Suppose $v\in V$ is balanced and $v'\in V$ is unbalanced. Then $T[v]\cap T[v']=0$. 
\end{corollary}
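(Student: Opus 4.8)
The plan is to analyze the two cyclic, $T$-invariant subspaces $T[v]$ and $T[v']$ through their \emph{socles} (their intersections with $\ker T$) and to distinguish the balanced from the unbalanced case using the intertwining relation $f\circ T=T'\circ f$, which holds because $fT=f(gf)=(fg)f=T'f$, so that $fT^{i}=(T')^{i}f$ for all $i$.

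First I would record the module-theoretic structure. Each of $T[v]$ and $T[v']$ is $T$-invariant, and by construction $T$ acts nilpotently and cyclically on it; as a $\kk[T]$-module one has $T[v]\cong \kk[T]/(T^{a})$, where $a=\dim T[v]$, and likewise $T[v']\cong \kk[T]/(T^{a'})$ with $a'=\dim T[v']$, since the spanning vectors are linearly independent and $T^{a}v=0$, $T^{a'}v'=0$. In such a cyclic nilpotent module the kernel of $T$ is one-dimensional: $\ker T\cap T[v]=\langle T^{a-1}v\rangle$ and $\ker T\cap T[v']=\langle T^{a'-1}v'\rangle$. These one-dimensional socles are the objects I would compare.

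Second, I would reduce the claim to a statement about socles. Set $U=T[v]\cap T[v']$ and suppose, for contradiction, that $U\neq 0$. As an intersection of $T$-invariant subspaces, $U$ is $T$-invariant, and $T|_{U}$ is nilpotent; a nonzero space on which a nilpotent operator acts has nonzero kernel, so $U\cap\ker T\neq 0$. But $U\cap\ker T$ lies in both of the one-dimensional socles above, forcing $\langle T^{a-1}v\rangle=\langle T^{a'-1}v'\rangle$; that is, $T^{a-1}v=c\,T^{a'-1}v'$ for some nonzero scalar $c$.

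Finally, I would derive a contradiction by applying $f$ and using $fT^{i}=(T')^{i}f$. On the one hand $v$ is balanced, so $f$ restricts to an isomorphism $T[v]\to T'[fv]$; in particular $f(T^{a-1}v)=(T')^{a-1}(fv)\neq 0$, since with $a=\ell$ the vector $(T')^{a-1}fv=(T')^{\ell-1}fv$ is the last nonzero term of the chain spanning $T'[fv]$. On the other hand $v'$ is unbalanced, so $\dim T'[fv']=a'-1$ and the chain for $fv'$ terminates one step earlier, giving $f(T^{a'-1}v')=(T')^{a'-1}(fv')=0$. Applying $f$ to $T^{a-1}v=c\,T^{a'-1}v'$ then yields $0\neq f(T^{a-1}v)=c\,f(T^{a'-1}v')=0$, a contradiction; hence $U=0$. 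The only delicate point, and the step I would be most careful to justify, is the socle computation together with the observation that $f$ annihilates the socle of an unbalanced chain but not that of a balanced one; once these are in place, the rest is formal.
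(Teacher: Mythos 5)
Your proof is correct, but it takes a genuinely different route from the paper's. The paper derives this corollary in one line from the preceding lemma: by part (1) of that lemma every vector of $T[v]$ is balanced, by part (2) every nonzero vector of $T[v']$ is unbalanced, and since a vector cannot be simultaneously balanced and unbalanced (the two cases $a=\ell$ and $\ell=a-1$ are mutually exclusive by definition), the intersection contains only $0$. You instead give a self-contained argument that never invokes the pointwise propagation statements of the lemma: you identify $T[v]\cong \kk[T]/(T^{a})$ and $T[v']\cong \kk[T]/(T^{a'})$ as cyclic nilpotent $\kk[T]$-modules, reduce a hypothetical nonzero $T$-invariant intersection to a comparison of the one-dimensional socles $\langle T^{a-1}v\rangle$ and $\langle T^{a'-1}v'\rangle$, and then use the intertwining relation $fT^{i}=(T')^{i}f$ to observe that $f$ kills the socle of an unbalanced chain ($(T')^{a'-1}fv'=0$ since $\ell'=a'-1$) but not that of a balanced one ($(T')^{a-1}fv\neq 0$ since $\ell=a$), a contradiction. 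The paper's approach buys economy: the lemma is stated once and feeds all three corollaries, so this one is immediate. Your approach buys independence and precision: it needs only the socle comparison rather than the stronger fact that balancedness propagates to every vector of the chain, and the mechanism you isolate (whether $f$ annihilates the last nonzero vector of the chain) is exactly the computation one would use to prove parts (1)--(2) of the paper's lemma in the first place. Your handling of edge cases is also sound: if the intersection is nonzero then both chains are nonzero, so $a,a'\ge 1$ and the socle argument applies.
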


\begin{corollary} Suppose $v\in V$ and $w\in W$ are balanced. Then 
\[
T[v] \cap T[g(w)]=0\subset V, \ \ 
T'[f(v)]\cap T'[w]=0\subset W,
\]
and the sums $T[v] + T[g(w)]$ in $V$ and $T'[f(v)]+ T'[w]$ in $W$ are isomorphic to respective direct sums. 
\end{corollary}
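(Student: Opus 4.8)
The plan is to deduce this corollary directly from the immediately preceding corollary (a balanced vector and an unbalanced vector in the same space have $T$-spans, resp. $T'$-spans, meeting only in $0$), using part (3) of the preceding lemma to convert the hypotheses on $v$ and $w$ into unbalancedness statements about $g(w)$ and $f(v)$. The direct-sum claim will then be a formal consequence of the two vanishing intersections.

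First I would handle the intersection in $V$. If $g(w)=0$ then $T[g(w)]=0$ and $T[v]\cap T[g(w)]=0$ holds trivially, so I may assume $g(w)\neq 0$. Since $w\in W$ is balanced, part (3) of the preceding lemma gives that $g(w)\in V$ is unbalanced. Now $v$ is balanced and $g(w)$ is unbalanced, both living in $V$, so the preceding corollary applies with $v'=g(w)$ and yields $T[v]\cap T[g(w)]=0$. The claim in $W$ is entirely symmetric: if $f(v)=0$ it is immediate, and otherwise part (3) of the lemma, applied to the balanced vector $v$, shows that $f(v)\in W$ is unbalanced, whence the $W$-version of the preceding corollary, applied to the balanced $w$ and the unbalanced $f(v)$, gives $T'[f(v)]\cap T'[w]=0$.

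Finally, the direct-sum assertion is formal: for subspaces $U_1,U_2$ of a vector space with $U_1\cap U_2=0$, the addition map $U_1\oplus U_2\to U_1+U_2$, $(u_1,u_2)\mapsto u_1+u_2$, is an isomorphism. Applying this to the pairs $(T[v],T[g(w)])$ in $V$ and $(T'[f(v)],T'[w])$ in $W$ finishes the argument. I expect no genuine obstacle here, since the heart of the statement is a direct invocation of the preceding lemma and corollary; the only point needing a word of care is the degenerate case $g(w)=0$ (resp. $f(v)=0$), where $g(w)$ (resp. $f(v)$) is not literally unbalanced, but the corresponding $T$-span is the zero subspace and so the intersection vanishes for trivial reasons.
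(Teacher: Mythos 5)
Your proof is correct and matches the paper's intent exactly: the paper states this corollary without proof, as an immediate consequence of part (3) of the preceding lemma together with the preceding corollary on balanced--unbalanced intersections, which is precisely the chain of reasoning you spell out. Your handling of the degenerate cases $g(w)=0$ and $f(v)=0$ (where the relevant span is the zero subspace) and the formal passage from trivial intersection to direct sum are both sound.
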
 

\begin{lemma}\label{lemma_quadruples} There is a bijection between quadruples  $(f,g,v_b,w_b)$ where $(f,g)$ is nilpotent, $v_b\not=0$ and $w_b$ are balanced in $V$ and $W$, respectively,  
and quadruples $(f,g,v_u,w_u)$ where $(f,g)$ is nilpotent and $v_u,w_u$ are unbalanced in $V$ and $W$, respectively.  
\end{lemma}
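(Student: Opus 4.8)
The plan is to construct the correspondence explicitly, using the type-swapping behaviour of $f$ and $g$ recorded in the preceding lemma together with the absorption bijection of Theorem~\ref{thm_gen_nilpotent_two_vs}. The first point I would emphasize is that, contrary to a naive guess, the bijection \emph{cannot} fix the pair $(f,g)$: writing $b_V,u_V$ (respectively $b_W,u_W$) for the numbers of balanced (including $0$) and of unbalanced vectors in $V$ (respectively $W$) attached to a fixed nilpotent pair, one checks already on $m=n=1$ with $f=g=0$ that $(b_V-1)\,b_W=0\neq (q-1)^2=u_V u_W$, so the two marked pairs must be allowed to differ. What the statement really asserts is the aggregate identity
\[
\sum_{(f,g)\in\mathcal{N}(V,W)} (b_V-1)\,b_W \ = \ \sum_{(f,g)\in\mathcal{N}(V,W)} u_V\,u_W ,
\]
upgraded to a bijection of the underlying sets. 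Note that Theorem~\ref{thm_gen_nilpotent_two_vs}, together with its $W$-analogue (the remark following Corollary~\ref{cor_averages_bal}), already supplies $\sum_{(f,g)} b_V = \sum_{(f,g)} b_W = q^{2mn}$, since each side counts all of $\Hom(V,W)\times\Hom(W,V)$.

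For the construction I would proceed by cases on $w_b$. Given a left quadruple $(f,g,v_b,w_b)$ with $v_b\neq 0$, recall that a nonzero balanced vector has $f$ restricting to an isomorphism $T[v_b]\to T'[fv_b]$, so $f(v_b)\neq 0$ and is unbalanced; symmetrically a nonzero balanced $w_b$ satisfies $g(w_b)\neq 0$ unbalanced. When $w_b\neq 0$ the natural candidate is $w_u:=f(v_b)$ and $v_u:=g(w_b)$, both unbalanced; the obstruction is that $f,g$ are far from injective, so one must simultaneously modify $(f,g)$ so as to \emph{record} the chosen preimages $v_b,w_b$ along their chains $T[v_b],T'[w_b]$ — this is exactly the Leinster-type bookkeeping already used to prove Theorem~\ref{thm_gen_nilpotent_two_vs}, where a balanced chain is converted into an invertible Fitting block. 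When $w_b=0$ this symmetric recipe degenerates, and I would instead perform surgery on the single chain of $v_b$ of length $\ell$: replace $f$ by the map $f'$ that agrees with $f$ except that it kills the top vector $T^{\ell-1}v_b$ while leaving $g$ fixed. A short computation with $T_{\mathrm{new}}=gf'$ shows that $v_b$ keeps its $V$-chain of dimension $\ell$ but its $W$-chain drops to $\ell-1$, so $v_b$ becomes unbalanced; meanwhile the freed top $w_u:=(T')^{\ell-1}f(v_b)$ spans its own chain and is unbalanced. The model is $(f,g,v_b,0)\mapsto(f',g,v_b,f(v_b))$, which for $\dim V=1$ reduces to $(f,g,v_b,0)\mapsto(0,g,v_b,fv_b)$ and is visibly a bijection onto its image.

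The inverse reglues: from $(f,g,v_u,w_u)$ one re-absorbs $w_u$ (and, when present, $v_u$) back into a chain, reconstructing $v_b,w_b$ and the original pair, in the same way that the inverse in Theorem~\ref{thm_gen_nilpotent_two_vs} un-absorbs an invertible block. The step I expect to be the main obstacle is precisely the \emph{global consistency} of this bookkeeping: showing that the ``store-the-preimage'' modification of $(f,g)$ used when $w_b\neq 0$ and the ``cut-the-top-chain'' surgery used when $w_b=0$ assemble into a single well-defined, manifestly invertible rule covering all degenerate cases (zero $w_b$, length-one or terminal chains, and the interaction of the $v_b$- and $w_b$-chains when they share vectors), while keeping the modified pair nilpotent and $v_u,w_u$ genuinely unbalanced. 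As an independent consistency check on cardinalities, the displayed identity combined with $\sum b_V=\sum b_W=q^{2mn}$ is equivalent, after cancelling the common term $\sum b_Vb_W$, to $\mathcal{N}_{m,n}=q^{2mn-m}+q^{2mn-n}-q^{2mn-m-n}$, which one confirms by evaluating the sum of Theorem~\ref{thm_nilpotent_pairs_fin_field}; this forces the two sets to have equal size, so once the bookkeeping above is verified the explicit map is automatically surjective.
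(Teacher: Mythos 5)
Your structural observations are sound: the bijection indeed cannot fix $(f,g)$ (your $m=n=1$, $f=g=0$ example is correct), the aggregate identity you display is exactly what must be established, and the paper's proof does, as you anticipate, keep $g$ fixed and perform surgery on $f$. But the proposal stops short precisely where the content lies. In your main case $w_b\neq 0$ you never define the modified pair: ``Leinster-type bookkeeping'' and ``record the chosen preimages'' are pointers, not a construction. The paper's rule is a single uniform recipe with no case split on $w_b$: after fixing once and for all a complement $X_0^{\perp}$ of every subspace $X_0$, set $V''=(T[v_b]+T[gw_b])^{\perp}$ and $V'=V''+T(T[v_b])+T[gw_b]$, let $f'$ agree with $f$ on $V'$ and $f'(v_b)=w_b$, and send $(f,g,v_b,w_b)\mapsto (f',g,v_b,fv_b)$; the inverse has the same shape (redefine the map on $v_u$ to take the value $w_u$, and output $(f'',g,v_u,f'v_u)$), and the entire proof consists of checking that the complements $V'$ and $V'_{\circ}$ used in the two directions coincide --- exactly the ``global consistency'' that you flag as the main obstacle and leave unresolved.

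Moreover, the one concrete construction you do give (case $w_b=0$: kill the top vector $T^{\ell-1}v_b$) is internally inconsistent. With that surgery, for $\ell\ge 2$ the vector $f(v_b)$ is \emph{balanced} for the new pair: writing $T_{\mathrm{new}}=gf'$ and $T'_{\mathrm{new}}=f'g$, the chain of $fv_b$ in $W$ becomes $fv_b,\,T'fv_b,\dots,(T')^{\ell-2}fv_b$ (length $\ell-1$, since $(T'_{\mathrm{new}})^{\ell-1}fv_b=f'(T^{\ell-1}v_b)=0$), while the chain of $g(fv_b)=Tv_b$ in $V$ is $Tv_b,\dots,T^{\ell-1}v_b$, also of length $\ell-1$. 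So your model map $(f,g,v_b,0)\mapsto(f',g,v_b,f(v_b))$ does not land in the target set; the unbalanced vector is the ``freed top'' $(T')^{\ell-1}fv_b$, as your own preceding sentence says, and the two statements contradict each other (they agree only when $\ell=1$, which is why the $\dim V=1$ sanity check passes). The paper avoids this by redefining $f$ on $v_b$ itself, so that $f'(v_b)=w_b=0$, which makes $v_b$ unbalanced with chain lengths $(1,0)$ and $fv_b$ unbalanced with chain lengths $(a,a-1)$. Finally, the cardinality fallback is not a proof as written: the evaluation $\sum_r q^{mn-r}\mathcal{N}_{m,n;r}=q^{2mn-m-n}(q^m+q^n-1)$ is a nontrivial $q$-binomial identity that you assert ``one confirms'' but do not carry out, and the paper never evaluates that sum --- it obtains the closed form \emph{from} this lemma. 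Had you proved that identity, equal cardinalities (hence an abstract bijection of finite sets) would indeed follow from Theorem~\ref{thm_gen_nilpotent_two_vs} together with your displayed identity, giving a legitimate non-constructive alternative; as it stands, neither route is completed.
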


\begin{proof}
First, for each linear subspace $V_0\subset V$ pick a complementary subspace $V_0^{\perp}\subset V$, so that $V=V_0+ V_0^{\perp}$ and $V_0\cap V_0^{\perp} = 0$. 

Start with a quadruple $(f,g,v_b,w_b)$ as above with $v_b\not=0$ (call such a quadruple \emph{balanced}). Note the direct sum decomposition 
\[
T[v_b]\cong \kk v_b \oplus T(T[v_b]) = \kk v_b \oplus \kk\langle Tv_b, T^2v_b, \dots, T^{a-1}v_b\rangle, 
\]
where $a=\dim T[v_b]$.
Let $V'':=(T[v_b]+T[g w_b])^{\perp}$ and define 
$V':= V''+ T(T[v_b])+T[gw_b].$ Note that $\dim V'= \dim V - 1$, $V'\cap \kk v_b = 0$, and $V= \kk v_b + V'$. Also, with our notations, there is an equality of subspaces $T[Tv_b]=T(T[v_b])$. 

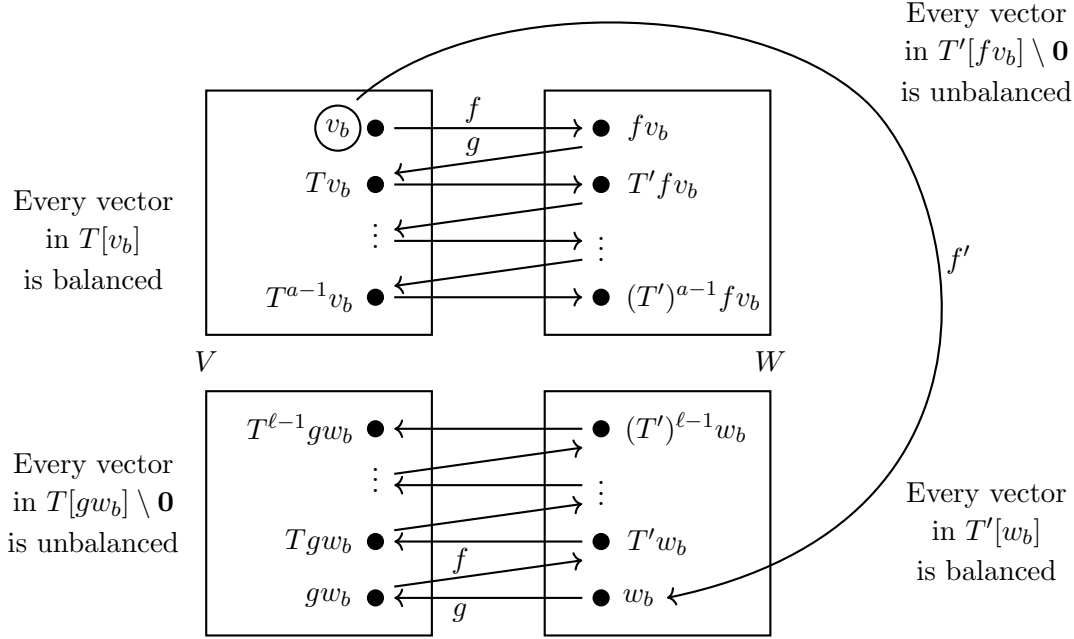
\begin{figure}
    \centering
\begin{tikzpicture}[scale=0.5,decoration={
    markings,
    mark=at position 0.5 with {\arrow{>}}}]


\begin{scope}[shift={(0,0)}]

\draw[thick] (-1,0) rectangle (5,6.5);

\node at (-1.0,-0.7) {$V$};

\node at (-4,3.5) {Every vector};

\node at (-4,2.5) {in $T[v_b]$};


\node at (-4,1.5) {is balanced};

\draw[thick,fill] (3.7,5.5) arc (0:360:2mm);

\draw[thick] (3.1,5.5) arc (0:360:0.6);

\draw[thick] (3,6.25) .. controls (6,9) and (13.5,9) .. (16.5,6.25);

\draw[thick,->] (16.5,6.25) .. controls (19,4) and (21,-5) .. (11.25,-7);

\node at (19,2) {$f'$};

\node at (2.5,5.5) {$v_b$};

\draw[thick,fill] (3.7,4) arc (0:360:2mm);

\node at (2.25,4) {$T v_b$};

\node at (3.5,2.85) {$\vdots$};

\draw[thick,fill] (3.7,1) arc (0:360:2mm);

\node at (1.75,1) {$T^{a-1}v_b$};

\node at (6.1,6) {$f$};

\draw[thick,->] (4,5.5) -- (9,5.5);

\node at (6.1,5) {$g$};

\draw[thick,<-] (4,4.3) -- (9,5);


\draw[thick,->] (4,4) -- (9,4);

\draw[thick,<-] (4,2.8) -- (9,3.5);


\draw[thick,->] (4,2.5) -- (9,2.5);


\draw[thick,<-] (4,1.3) -- (9,2);

\draw[thick,->] (4,1) -- (9,1);


\end{scope}


\begin{scope}[shift={(8,0)}]

\draw[thick] (0,0) rectangle (6,6.5);

\node at (6.0,-0.7) {$W$};

\node at (11.75,8.5) {Every vector};

\node at (11.75,7.5) {in $T'[fv_b]\setminus \mathbf{0}$};


\node at (11.75,6.5) {is unbalanced};

\draw[thick,fill] (1.7,5.5) arc (0:360:2mm);

\node at (2.75,5.5) {$f v_b$};

\draw[thick,fill] (1.7,4) arc (0:360:2mm);

\node at (3.15,4) {$T' f v_b$};

\node at (1.5,2.5) {$\vdots$};

\draw[thick,fill] (1.7,1) arc (0:360:2mm);

\node at (3.95,1) {$(T')^{a-1} fv_b$};

\end{scope}


\begin{scope}[shift={(0,-8)}]

\draw[thick] (-1,0) rectangle (5,6.5);

\node at (-4,4.5) {Every vector};

\node at (-4,3.5) {in $T[g w_b]\setminus \mathbf{0}$};


\node at (-4,2.5) {is unbalanced};

\draw[thick,fill] (3.7,5.5) arc (0:360:2mm);

\node at (1.5,5.5) {$T^{\ell-1}g w_b$};

\node at (3.5,4.35) {$\vdots$};

\node at (2.0,2.5) {$Tg w_b$};

\draw[thick,fill] (3.7,2.5) arc (0:360:2mm);

\draw[thick,fill] (3.7,1) arc (0:360:2mm);

\node at (2.25,1) {$g w_b$};


\draw[thick,<-] (4,5.5) -- (9,5.5);


\draw[thick,->] (4,4.3) -- (9,5);


\draw[thick,<-] (4,4) -- (9,4);

\draw[thick,->] (4,2.8) -- (9,3.5);


\draw[thick,<-] (4,2.5) -- (9,2.5);


\draw[thick,->] (4,1.3) -- (9,2);

\node at (5.75,2) {$f$};

\draw[thick,<-] (4,1) -- (9,1);

\node at (5.75,0.60) {$g$};

\end{scope}


\begin{scope}[shift={(8,-8)}]

\draw[thick] (0,0) rectangle (6,6.5);

\node at (11.75,3.75) {Every vector};

\node at (11.75,2.75) {in $T'[w_b]$};


\node at (11.75,1.75) {is balanced};

\draw[thick,fill] (1.7,5.5) arc (0:360:2mm);

\node at (3.75,5.5) {$(T')^{\ell-1} w_b$};

\node at (1.5,4) {$\vdots$};

\draw[thick,fill] (1.7,2.5) arc (0:360:2mm);

\node at (3,2.5) {$T' w_b$};

\draw[thick,fill] (1.7,1) arc (0:360:2mm);

\node at (2.5,1) {$w_b$};

\end{scope}


\end{tikzpicture}
    \caption{Top figure: $f$ is an isomorphism. Bottom figure: $g$ is an isomorphism. The map $f$ is redefined to $f'$ by changing it on $v_b$ to $f'(v_b)=w_b$. Then using the pair $(f',g)$, every vector in $T'[w_b]$, except $\mathbf{0}$, becomes unbalanced.  
    So we have the decompositions $V = T[v_b]\oplus T[gw_b] \oplus \widetilde{V}$ and $W = T'[w_b] \oplus T'[fv_b]\oplus \widetilde{W}$ for some vector spaces $\widetilde{V}$ and $\widetilde{W}$. 
    }
    \label{fig_20001}
\end{figure}

We define the map $f':V\lra W$ to coincide with $f$ on $V'$ and $f'(v_b)=w_b$. Thus, we are redefining $f$ on the vector $v_b$, see Figure~\ref{fig_20001}, but need its complement $V'$ for that definition. In the block decomposition of $V\cong T[v_b]\oplus T[gw_b]\oplus V''$ and $W\cong T'[fv_b]\oplus T'[w_b]\oplus W''$, where $W''$ is defined similarly,  maps $f$ and $f'$ have the form 

    \begin{eqnarray}
    & & \label{eqn_f_g_nilp_triple}
    f = 
    \kbordermatrix{
                   &  T[v_b] & T[g w_b] & V''  \\ 
    T'[fv_b]         &  \I_{a}  & 0 & \ast \\
    T'[w_b] &  0    & J_{\ell} & \ast  \\
W'' &  0    & 0 & f_{(1)} 
    }, 
    \quad\quad 
    g = \kbordermatrix{
                   &  T'[fv_b] & T'[ w_b] & W''  \\ 
    T[v_b]         &  J_{\ell}  & 0 & \ast \\
    T[gw_b] &  0    & \I_a & \ast  \\
V'' &  0    & 0 & g_{(1)} 
    },  \\
    & & 
    f' = 
    \kbordermatrix{
                   &  T_{\circ}[v_b] & T_\circ[T v_b] & V_\circ''  \\ 
    T'_{\circ}[w_b]         &  Z_{\ell}  & 0 & \ast \\
    T'_{\circ}[fv_b] &  0    & Z'_{a} & \ast  \\
W_\circ'' &  0    & 0 & f_{(1)} 
    },
    \end{eqnarray}
where $Z_\ell$ is an $\ell\times (\ell+1)$ matrix with 1's on the main diagonal and 0's everywhere else. $Z'_a$ is an $(a+1)\times a$ matrix with $1$'s right below the main diagonal and 0's everywhere else. (Note that the bases of $V,W$ used to write down matrices of $f,g$ above are different than the bases used for the matrix of $f'$.)

Here $T_{\circ}$ and $T'_{\circ}$ are the maps $T$ and $T'$ for the new pair $(f',g)$, and  $J_{\ell}$ is the standard matrix of a Jordan block of size $\ell$. 
The pair of entries  $(f_{(1)},g_{(1)})$ from lower right corners for $f$ and $g$ is a nilpotent pair. The subspace $W''$ is picked only to write down the block matrices in \eqref{eqn_f_g_nilp_triple}, while picking $V''$ is essential.

To a ``balanced'' quadruple $(f,g,v_b,w_b)$ the above construction assigns the ``unbalanced'' quadruple $(f',g,v_b,fv_b)$. Thus, 
\[
v_u := v_b, \ \ w_u:= f v_b. 
\]
Vectors $v_b\in V$ and $fv_b\in W$ are unbalanced for $(f',g)$, which requires $v_b\not=0$, as above. Denote this map by 
\begin{equation}
\rho: (f,g,v_b,w_b) \longmapsto (f',g,v_b,fv_b).
\end{equation}
It depends on the bijection $\perp$ above that picks a complement of each subspace and which is used to define $V''$ for each quadruple $(f,g,v_b,w_b)$. 

Next, define a map in the opposite direction, from unbalanced to balanced quadruples, as follows:  
\begin{equation}
\rho': (f',g,v_u,w_u) \longmapsto (f'',g,v_u,f'v_u).
\end{equation} 
Let $V_\circ''=(T_\circ[v_u]\oplus T_\circ[gw_u])^\perp$ and define $V_\circ'=V_\circ''\oplus T_\circ(T_{\circ}[v_u])\oplus T_\circ[gw_u]$.
We define the map $f'':V\to W$ to coincide with $f'$ on $V_\circ'$ and $f''(v_u)=w_u$. 

\vspace{0.07in}

We next show that $\rho$ and $\rho'$ are mutually-inverse bijections. 

(1) $\rho'\rho=\id$: map $\rho$ takes  $(f,g,v_b,w_b)$ to $(f',g,v_b,fv_b)$, while $\rho'$ takes $(f',g,v_b,fv_b)$ to $(f'',g,v_b,w_b)$ since $f'v_b=w_b$. It suffices to check $f''=f$. 
Maps $f''$ and $f'$ agree on $V_\circ'=V_\circ''\oplus T_\circ[T_\circ v_b]\oplus T_\circ[Tv_b]$, and $f$ and $f'$ agree on $V'= V''\oplus T[Tv_b]\oplus T[gw_b]$. Since $f''v_b=fv_b$ by definition of $f''$, the statement follows if we can show $V'=V_\circ'$.\\
Note $T_\circ v_b=gf'v_b=gw_b$, so $T_\circ[T_\circ v_b]=T_\circ[gw_b]$. Because $f'=f$ on $V'$, in particular $f'(gw_b)=f(gw_b)$, we know $T_\circ[T_\circ v_b]=T_\circ[gw_b]=T[gw_b]$. Also, $f$ and $f'$ agreeing on $V'$ implies $f'(Tv_b)=f(Tv_b)$, so $T_\circ[Tv_b]=T[Tv_b]$. Hence $T_\circ[T_\circ v_b]\oplus T_\circ[Tv_b]=T[Tv_b]\oplus T[gw_b]$. Adding $v_b$ to $T_\circ[T_\circ v_b]$ and $T[Tv_b]$ respectively, we have $V''=V''_\circ$ by the choice of them at the beginning of the proof. Hence $V'=V_\circ'$. 

\vspace{0.07in} 

(2) $\rho\rho'=\id$: map $\rho'$ takes $(f,g,v_u,w_u)$ to $(f',g,v_u,fv_u)$, while $\rho$ sends $(f',g,v_u,fv_u)$ to $(f'',g,v_u,w_u)$ since $f'v_u=w_u$. It suffices to check $f''=f$. 
Maps $f''$ and $f'$ agree on $V_\circ'=V_\circ''\oplus T_\circ[T_\circ v_u]\oplus T_\circ[T v_u]$, and $f$ and $f'$ agree on $V'=V''\oplus T[Tv_u]\oplus T[gw_u]$. Since $f''v_u=fv_u$ by definition of $f''$, the statement follows if we can show $V'=V_\circ'$.\\
Note $T_\circ v_u=gf'v_u=gw_u$, so $T_\circ[T_\circ v_u]=T_\circ[gw_u]$. Because $f'=f$ on $V'$, in particular $f'(gw_u)=f(gw_u)$, we know $T_\circ[T_\circ v_u]=T_\circ[gw_u]=T[gw_u]$. Also, $f$ and $f'$ agreeing on $V'$ implies $f'(Tv_u)=f(Tv_u)$, so $T_\circ[Tv_u]=T[Tv_u]$. Hence $T_\circ[T_\circ v_u]\oplus T_\circ[Tv_u]=T[Tv_u]\oplus T[gw_u]$. Adding $v_u$ to $T_\circ[T_\circ v_u]$ and $T[Tv_u]$ respectively, we have $V''=V''_\circ$ by the choice of them at the beginning of the proof. Hence $V'=V_\circ'$.  \\
Thus the bijection is established, completing the proof of the lemma.
\end{proof}


\begin{corollary}\label{cor_bij_quadruple}
    There is a bijection between the set of quadruples $(f,g,v_b,w_b)$ with $(f,g)$ nilpotent and $v_b,w_b$ balanced and the disjoint union of the following two sets: (1) the set of  quadruples $(f,g,v_u,w_u)$ with $(f,g)$ nilpotent and $v_u,w_u$ unbalanced, (2) the set of triples $(f,g,w_b)$ where $(f,g)$ is nilpotent and $w_b\in W$ is balanced. 
\end{corollary}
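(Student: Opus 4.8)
The plan is to derive this corollary almost immediately from Lemma~\ref{lemma_quadruples} by splitting the source set according to whether the balanced vector $v_b \in V$ is zero or nonzero. The only conceptual point to settle first is that the zero vector counts as balanced: when $v=0$ we have $a=0$, so $T[v]=0$ and $T'[fv]=T'[0]=0$, whence $\dim T[v]=\dim T'[fv]=0$ and the balanced condition $a=\ell$ holds (vacuously, with length $0$). Consequently the set of balanced quadruples $(f,g,v_b,w_b)$ decomposes as a disjoint union over the two mutually exclusive cases $v_b=0$ and $v_b\neq 0$, and I would treat each case separately.

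For the subset with $v_b\neq 0$, I would invoke Lemma~\ref{lemma_quadruples} verbatim: its hypotheses are exactly ``$(f,g)$ nilpotent, $v_b\neq 0$ balanced, and $w_b$ balanced,'' and it produces a bijection onto the set of quadruples $(f,g,v_u,w_u)$ with $(f,g)$ nilpotent and $v_u,w_u$ unbalanced. This is precisely set (1) in the statement. (Here it is worth recalling that an unbalanced vector satisfies $\ell=a-1$ with $a\ge 1$, so it is automatically nonzero, which is compatible with $v_u=v_b\neq 0$ under the bijection $\rho$.) For the subset with $v_b=0$, the data of the quadruple $(f,g,0,w_b)$ carries no more information than the triple $(f,g,w_b)$, since the first vector is forced to be $0$; conversely every triple $(f,g,w_b)$ with $(f,g)$ nilpotent and $w_b\in W$ balanced extends uniquely to the quadruple $(f,g,0,w_b)$. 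This forgetful correspondence is a trivial bijection onto set (2).

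Assembling the two pieces, the set of balanced quadruples is in bijection with the disjoint union of (1) and (2), as claimed. I expect no genuine obstacle here: the entire content of the bijection on the nonzero part is supplied by Lemma~\ref{lemma_quadruples}, and the zero part is a matter of bookkeeping. The single subtlety, which is exactly what distinguishes this corollary from the lemma, is the observation that $v_b=0$ is balanced and must be accounted for separately; isolating that case is what introduces the extra summand of triples $(f,g,w_b)$.
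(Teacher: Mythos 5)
Your proposal is correct and follows exactly the paper's argument: the paper's own proof is the one-line instruction ``Use the previous lemma and consider two cases: $v_b\not=0$ and $v_b=0$,'' which is precisely your decomposition, with Lemma~\ref{lemma_quadruples} handling the nonzero case and the forgetful bijection $(f,g,0,w_b)\leftrightarrow(f,g,w_b)$ handling the zero case. Your additional observations --- that the zero vector is balanced of length $0$ and that unbalanced vectors are automatically nonzero --- are correct and make explicit what the paper leaves implicit.
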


\begin{proof} Use the previous lemma and consider two cases: $v_b\not=0$ and $v_b=0$.
\end{proof}

\begin{theorem}
\label{thm_dim_nilpot_pairs}
There is a bijection
\begin{equation}\label{eq_biject}
\Hom(V,W) \times \Hom(W,V) \times (V\cup_0 W) \cong \mathcal{N}(V,W) \times V \times W. 
\end{equation}
The number of nilpotent pairs $(f,g)$ is 
\begin{equation} 
\label{eqn_num_nil_pairs_elegant_expression}
\mathcal{N}_{m,n}:=|\mcN(V,W)|=q^{2mn-m-n}(q^m + q^n - 1). 
\end{equation}
\end{theorem}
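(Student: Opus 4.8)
The plan is to construct the bijection \eqref{eq_biject} directly and then read off \eqref{eqn_num_nil_pairs_elegant_expression} by comparing cardinalities. Since $|\Hom(V,W)|=|\Hom(W,V)|=q^{mn}$ and $|V\cup_0 W|=(q^m-1)+(q^n-1)+1=q^m+q^n-1$, the left-hand side of \eqref{eq_biject} has $q^{2mn}(q^m+q^n-1)$ elements, while the right-hand side has $|\mcN(V,W)|\cdot|V|\cdot|W|=\mcN_{m,n}\,q^{m+n}$. Equating the two forces
\[
\mcN_{m,n}=\frac{q^{2mn}(q^m+q^n-1)}{q^{m+n}}=q^{2mn-m-n}(q^m+q^n-1),
\]
so the entire content is the existence of the bijection.

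First I would partition the right-hand side $\mcN(V,W)\times V\times W$ according to the balanced/unbalanced type of the chosen vectors. For a fixed nilpotent pair $(f,g)$ every vector of $V$ (and of $W$) is either balanced or unbalanced, with $0$ counted as balanced of length $0$; this is exactly the dichotomy recorded in Section~\ref{subsection_nilp_pairs_balanced_vec}. Hence $\mcN(V,W)\times V\times W$ decomposes into four blocks, which I denote $Q_{bb},Q_{bu},Q_{ub},Q_{uu}$, consisting of quadruples $(f,g,v,w)$ with $(f,g)$ nilpotent and $(v,w)$ of type (balanced, balanced), (balanced, unbalanced), (unbalanced, balanced), (unbalanced, unbalanced), respectively.

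Next I would decompose the left-hand side compatibly, using the asymmetric splitting $V\cup_0 W=V\sqcup(W\setminus\{0\})$. On the summand $\Hom(V,W)\times\Hom(W,V)\times V$ I apply the $W$-version of Theorem~\ref{thm_gen_nilpotent_two_vs} (available via Corollary~\ref{cor_averages_bal} and the remark following it) to replace $\Hom(V,W)\times\Hom(W,V)$ by the set of triples $(f,g,w_b)$ with $w_b$ balanced; pairing with the free vector $v\in V$ and splitting $v$ into balanced or unbalanced yields $Q_{bb}\sqcup Q_{ub}$. On the summand $\Hom(V,W)\times\Hom(W,V)\times(W\setminus\{0\})$ I apply the $V$-version of the theorem to obtain triples $(f,g,v_b)$ with $v_b$ balanced, pair with the free $w\in W\setminus\{0\}$, and split $w$ into unbalanced (which gives exactly $Q_{bu}$, since $0$ is balanced) and balanced-and-nonzero. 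The latter block is the set of quadruples $(f,g,v_b,w_b)$ with $v_b$ balanced and $w_b$ balanced, $w_b\neq 0$, which the $V\leftrightarrow W$ symmetric form of Lemma~\ref{lemma_quadruples} puts in bijection with $Q_{uu}$. Assembling the four blocks $Q_{bb}\sqcup Q_{ub}\sqcup Q_{bu}\sqcup Q_{uu}$ recovers $\mcN(V,W)\times V\times W$, completing the bijection.

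The main obstacle is the bookkeeping in this last step: one must choose the directional version of Theorem~\ref{thm_gen_nilpotent_two_vs} on each summand so that a single $V$-vector and a single $W$-vector survive (never two of the same kind), and then verify that the exclusion of the shared zero in the splitting $V\cup_0 W=V\sqcup(W\setminus\{0\})$ lines up precisely with the domain of the symmetric Lemma~\ref{lemma_quadruples}, whose bijection drops the zero balanced vector. Everything else is a repackaging of already-established correspondences. As a sanity check, for $m=n=1$ the formula returns $q^{0}(q+q-1)=2q-1$, which matches the count of scalar pairs $(f,g)\in\F_q^2$ with $fg=0$; one can likewise confirm that $q^{2mn-m-n}(q^m+q^n-1)$ agrees with the summation formula of Theorem~\ref{thm_nilpotent_pairs_fin_field}.
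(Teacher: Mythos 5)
Your proof is correct, and it relies on the same ingredients as the paper's own argument --- Theorem~\ref{thm_gen_nilpotent_two_vs} in both its $V$- and $W$-forms, the balanced/unbalanced dichotomy with $0$ counted as balanced, and Lemma~\ref{lemma_quadruples} --- but you assemble them along a genuinely different route. The paper keeps everything symmetric in $V$ and $W$: writing $[f,g]$ for $\Hom(V,W)\times\Hom(W,V)$, it adjoins an extra copy of $[f,g]$ to $\mcN(V,W)\times V\times W$, converts your block $Q_{uu}\sqcup[f,g]$ into $Q_{bb}$ via Corollary~\ref{cor_bij_quadruple} (Lemma~\ref{lemma_quadruples} together with the case $v_b=0$), regroups the resulting pieces into $[f,g]\times(V\sqcup W)$, and only at the very end removes a common summand $[f,g]$ from both sides to pass from $V\sqcup W$ to $V\cup_0 W$. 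You instead break the symmetry at the outset by splitting $V\cup_0 W=V\sqcup(W\setminus\{0\})$, and in place of Corollary~\ref{cor_bij_quadruple} you invoke the $V\leftrightarrow W$-swapped form of Lemma~\ref{lemma_quadruples}, i.e.\ the lemma applied to the pair $(g,f)$ on $(W,V)$, which matches quadruples with $v_b$ balanced and $w_b$ balanced nonzero against $Q_{uu}$; this appeal to the symmetry of the setup is legitimate, and your observation that unbalanced vectors are automatically nonzero makes the block $Q_{bu}$ come out exactly. The payoff of your assembly is that the bijection \eqref{eq_biject} is a literal composition of the established correspondences, with no cancellation step: the paper's deletion of a common summand from a bijection of disjoint unions is harmless for finite sets but is the one place where a bijection is modified rather than composed, and your route never introduces the extra summand. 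What the paper's route buys in exchange is that the four type-blocks are treated symmetrically until the last line. Both arguments yield the same count \eqref{eqn_num_nil_pairs_elegant_expression}, and your consistency checks ($m=n=1$ giving $2q-1$, and agreement with the summation formula of Theorem~\ref{thm_nilpotent_pairs_fin_field}) are valid.
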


In \eqref{thm_dim_nilpot_pairs}, $V\cup_0 W$ denotes the union of $V$ and $W$ along their $0$ vectors. This union is naturally a subset of $V\oplus W$. 
Expression \eqref{eqn_num_nil_pairs_elegant_expression} is a closed formula, c.f.  \eqref{eqn_nilpotent_pairs_Fq}, see also Lemma~\ref{lemma_no_rank_r_linear_maps} for the notation. 
It would also be interesting to relate the count in  Theorem~\ref{thm_eventually_const_pairs} to that in \eqref{eqn_num_nil_pairs_elegant_expression}. 

\begin{proof}
The count of nilpotent pairs in \eqref{eqn_num_nil_pairs_elegant_expression} follows from the bijection in \eqref{eq_biject} by taking cardinalities of both sides. The cardinality of the LHS is $q^{2nm}(q^n+q^m-1)$ and that of the RHS is $\mcN_{n,m}q^nq^m$. 

To establish bijection \eqref{eq_biject} we 
introduce some notations:
\begin{itemize}
\item $\{f,g\}$ denotes the set of nilpotent pairs $(f,g)$,
\item $\{v\}$ denotes the set of vectors $v\in V$, 
    \item $\{f,g,v\}$ denotes the set of triples consisting of nilpotent pair and a vector $v\in V$, 
    \item $\{f,g,v_b\}$, respectively $\{f,g,v_u\}$ denotes the set of triples as above together with a choice of a balanced vector $v_b\in V$, respectively unbalanced vector $v_u\in V$, 
    \item $\{f,g,v_b,w_u\}$ denotes the set of quadruples consisting of a nilpotent pair, a balanced $v_b\in V$, and an unbalanced $w_u\in W$, 
    \item $[f,g]$ denotes the set of all pairs of maps $f\in \Hom(V,W), g\in \Hom(W,V)$.  
\end{itemize} 
We omit a full list of such  notations. 

\vspace{0.07in} 

From Theorem~\ref{thm_gen_nilpotent_two_vs} we have bijections 
\begin{equation}\label{eq_bij_1}
\{f,g,v_b\} \sim [f,g]\sim \{f,g,w_b\}. 
\end{equation}
Corollary~\ref{cor_bij_quadruple} gives a bijection 
\begin{equation}\label{eq_bij_2}
\{f,g,v_b,w_b\} \sim  \{f,g,v_u,w_u\}\sqcup [f,g]. 
\end{equation}

Let us separate $v\in V$ into balanced $v_b$ and unbalanced $v_u$ and likewise for $w's$, giving bijections
\begin{equation}\label{eq_bij_3}
    \{f,g,v\}\sim\{f,g,v_b\} \sqcup \{f,g,v_u\}\sim [f,g]\sqcup \{f,g,v_u\},
    \end{equation}
and likewise for $w$'s. We have 
\begin{equation}\label{eq_bij_4}
    \{f,g,v_b,w\}\sim [f,g]\times \{w\}\sim [f,g]\times W, \ \ \{f,g,v,w_b\}\sim [f,g]\times \{v\}\sim [f,g]\times V. 
\end{equation}
Furthermore, 
\begin{eqnarray*}
    \{f,g,v,w\}\sqcup[f,g] & \sim & \{f,g,v_b,w_b\}\sqcup \{f,g,v_b,w_u\}\sqcup\{f,g,v_u,w_b\}\sqcup(\{f,g,v_u,w_u\} \sqcup [f,g]) \\
    & \sim & 
    \left(\{f,g,v_b,w_b\}\sqcup \{f,g,v_b,w_u\}\right) \sqcup 
    \left( \{f,g,v_u,w_b\}\sqcup \{f,g,v_b,w_b\} 
    \right) \\
    & \sim &
    \{f,g,v_b\}\times W \sqcup \{f,g,w_b\}\times V  \\
    & \sim & [f,g] \times (V\sqcup W). 
\end{eqnarray*}
In the second step bijection~\eqref{eq_bij_2} is used. The above chain of bijections gives a bijection
\begin{equation}\label{eq_bij_5}
\{f,g,v,w\}\sqcup[f,g] \sim [f,g] \times (V\sqcup W). 
\end{equation}
Modifying this bijection to remove a copy of the set $[f,g]$ from both sides gives a bijection 
\begin{equation}\label{eq_bij_6}
\{f,g,v,w\}\sim [f,g] \times (V\cup_0 W)
\end{equation}
for \eqref{eq_biject}.
\end{proof}

\begin{corollary}
\label{cor_prob_nil_pairs}
The probability that a random pair $(f,g)$ be nilpotent is 
\[ 
\dfrac{q^m + q^n - 1}{q^{m+n}}
 = q^{-m} + q^{-n} - q^{-m-n}.
\]
\end{corollary}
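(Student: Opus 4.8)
The plan is to divide the count of nilpotent pairs by the total number of pairs, so the only real input is the closed-form count already established. First I would record the denominator: since $f\in\Hom(V,W)$ and $g\in\Hom(W,V)$ range independently over two $\F_q$-vector spaces, each of dimension $mn$ (as $\dim V=m$ and $\dim W=n$), the total number of pairs is
\[
|\Hom(V,W)|\cdot|\Hom(W,V)| = q^{mn}\cdot q^{mn} = q^{2mn},
\]
and each such pair is drawn with equal probability under the uniform distribution.

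Next I would invoke Theorem~\ref{thm_dim_nilpot_pairs}, specifically the closed formula \eqref{eqn_num_nil_pairs_elegant_expression}, which gives $\mathcal{N}_{m,n}=q^{2mn-m-n}(q^m+q^n-1)$ for the number of nilpotent pairs. Forming the ratio and simplifying,
\[
\frac{\mathcal{N}_{m,n}}{q^{2mn}} = \frac{q^{2mn-m-n}(q^m+q^n-1)}{q^{2mn}} = q^{-m-n}(q^m+q^n-1) = \frac{q^m+q^n-1}{q^{m+n}},
\]
and distributing the denominator over the three summands yields $q^{-n}+q^{-m}-q^{-m-n}$, exactly the asserted value.

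There is no genuine obstacle here: the corollary is a one-line consequence of the main theorem, since all the combinatorial content lives in the bijection \eqref{eq_biject} and the extraction of the count \eqref{eqn_num_nil_pairs_elegant_expression}. The only points warranting care are the exponent bookkeeping—confirming that the total is $q^{2mn}$ rather than $q^{mn}$, reflecting the two independent maps $f$ and $g$—and the elementary algebraic simplification that rewrites $q^{-m-n}(q^m+q^n-1)$ in the final additive form.
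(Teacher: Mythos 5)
Your proposal is correct and is exactly the paper's argument: the paper's proof of Corollary~\ref{cor_prob_nil_pairs} simply says it follows at once from Theorem~\ref{thm_dim_nilpot_pairs}, i.e., dividing the count $q^{2mn-m-n}(q^m+q^n-1)$ by the total number $q^{2mn}$ of pairs. Your exponent bookkeeping and simplification are accurate, so there is nothing to add.
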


\begin{proof}
Follows at once from Theorem~\ref{thm_dim_nilpot_pairs}. 
\end{proof}

If $m$ is fixed and dimension $n$ of $W$ goes to infinity, this probability approaches 
$q^{-m} = 1/|V|$, 
    which is the probability that a random endomorphism of $V$ be nilpotent~\cite{Lei21}.

\begin{remark} Theorem~\ref{thm_gen_nilpotent_two_vs} and Corollary~\ref{cor_prob_nil_pairs} allow to compute the probability that a randomly chosen $v\in V$ be balanced for a random nilpotent pair $(V,W)$: 
\[
\mathsf{prob}(v \ \mathrm{is\ balanced}) 
= \frac{|\{f,g,v_b\}|}{|\mathcal{N}(V,W)|\times |V|} 
= \frac{q^n}{q^m + q^n - 1}.
\]
In particular, if $\dim V = m > n = \dim W$, this probability is less that $1/2$ and given by 
\[
\mathsf{prob}(v \ \mathrm{is\ balanced})
= \frac{1}{q^{m-n} + 1 - q^{-n}}. 
\]
For $m \gg n$, 
this probability is close to 
$q^{n-m}$. 
If $m<n$, 
    the probability is greater than $1/2$ and 
\[
\mathsf{prob}(v \ \mathrm{is\ balanced}) 
  = 
\frac{1}{1+q^{-n}(q^{m}-1)} \approx 1 - q^{m-n},
\]
where $\approx$ is for $n-m, n\gg 0$. 
If $m = n$,  
\[
\mathsf{prob}(v \ \mathrm{is\ balanced})= \frac{1}{2- q^{-m}}; 
\]
 slightly greater than $1/2$ but exponentially close to it for $m = n \gg 0$.  
\end{remark}

\begin{remark} A nilpotent endomorphism of a finite-dimensional vector space $V$ over $\kk$ is isomorphic to a direct sum of nilpotent Jordan blocks $J_r$ of sizes $r\ge 1$. The latter classify isomorphism classes of indecomposable nilpotent representations of the quiver with one vertex and one loop. See Figure~\ref{fig_00010} left. For the quiver $\Gamma$ in Figure~\ref{fig_00010} right, indecomposable nilpotent representations $(V,W,f,g)$ can be classified using balanced and unbalanced vectors.  Namely, 
\begin{itemize}
\item 
A balanced $v_b\in V$ with $\dim T[v_b]$ $=$ $\ell$ generates an indecomposable module $(T[v_b]$, $T'[fv_b])$ of dimension $(\ell,\ell )$. 
\item An unbalanced $v_u\in V$ with $\dim T[v_u]=\ell$ generates an indecomposable module $(T[v_u]$, $T'[fv_u])$ of dimension $(\ell, \ell-1)$. 
\item A balanced $w_b\in W$ with $\dim T'[w_b]$ $=$ $\ell$ generates an indecomposable module $(T[gw_b]$, $T'[w_b])$ of dimension $(\ell,\ell )$.
\item An unbalanced $w_u\in W$ with $\dim T'[w_b]$ $=$ $\ell$ generates an indecomposable module $(T[gw_u]$, $T'[w_u])$ of dimension $(\ell-1, \ell)$.
\end{itemize}
The dimension of a representation is understood to be the pair $(\dim V,\dim W)$.
Possible dimensions of nilpotent indecomposables are $(m, n)$ with $|m-n| \le 1$. Note that there are two non-isomorphic indecomposables of dimension $(m,m)$, 
    $m\ge 1$, 
        and one indecomposable when $n = m \pm 1$. 
\end{remark} 

\begin{remark} 
Theorem~\ref{thm_dim_nilpot_pairs} together with a simple count of maps implies that the number of triples: a nilpotent pair $(f,g)$ and a balanced vector $v_b\in V$ of length $\ell$, i.e., a balanced vector such that $\dim T[v_b]=\ell$, is given by 
 \[
\begin{split}
q^{2mn+\ell - (m + n)(\ell + 1)}(q^{m}+q^{n}-q^{\ell})
\prod_{i=0}^{\ell-1} (q^m - q^i) \prod_{j=0}^{\ell-1} (q^n - q^j) .
\end{split}
\]
\end{remark}

\begin{remark} It would be natural to give a motivic lifting of Theorem~\ref{thm_dim_nilpot_pairs}, similar to the motivic interpretation of the count of nilpotent endomorphisms in~\cite[Example 3.6.(2)]{GR25}.
\end{remark}



\section{Nilpotent endomorphisms over the Boolean semiring}
\label{section_enumeration_Bool_semiring}

The notion of a nilpotent operator makes sense for 
linear operators on semimodules over semirings. Consider the Boolean semiring
 $\mathbb{B} = \{ 0,1: 1+1 = 1\}$. The number of idempotents in the matrix semiring $M_n(\mathbb{B})$ was obtained by Butler~\cite{Butler1972}. We give a count of nilpotent matrices in $M_n(\mathbb{B})$, denoting the set of such matrices by $\mcN_n(\mathbb{B})$.

\begin{lemma}
Let $A = (A_{ij})\in \mcN_n(\mathbb{B})$. Then 
$A_{ii}=0$ for all $i$, and if $A_{ij}=1$, then $A_{ji}=0$. A Boolean matrix is nilpotent iff there is no oriented cycle of $1$'s among its entries, i.e., there is no sequence $i_1,i_2,\ldots, i_k$ such that $A_{i_1,i_2}=A_{i_2,i_3}=\ldots =A_{i_k,i_1}=1$.  
\end{lemma}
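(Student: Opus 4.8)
The plan is to translate the nilpotency condition into directed-graph language via the standard dictionary between Boolean matrix powers and walks, and then exploit the fact that walks in an acyclic graph cannot repeat a vertex. First I would record the computation that drives everything: in $\mathbb{B}$, matrix multiplication is $(AB)_{ij} = \bigvee_k (A_{ik}\wedge B_{kj})$, so $(AB)_{ij}=1$ exactly when some $k$ satisfies $A_{ik}=B_{kj}=1$. Let $G(A)$ be the directed graph on vertex set $\{1,\ldots,n\}$ with an edge $i\to j$ precisely when $A_{ij}=1$. By a routine induction on $m$, using the product formula in the inductive step, $(A^m)_{ij}=1$ if and only if there is a directed walk $i=i_0\to i_1\to\cdots\to i_m=j$ of length $m$ in $G(A)$. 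Consequently $A^m=0$ if and only if $G(A)$ has no directed walk of length $m$, and $A$ is nilpotent if and only if for some $m$ there is no such walk.

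Next I would prove necessity, which simultaneously yields the two preliminary assertions. Suppose $G(A)$ contains a directed cycle $i_1\to i_2\to\cdots\to i_k\to i_1$. Then for every positive integer $t$ there is a closed walk of length $tk$ based at $i_1$, obtained by traversing the cycle $t$ times, so $(A^{tk})_{i_1 i_1}=1$ and no power of $A$ vanishes; hence $A$ is not nilpotent. Taking $k=1$ recovers $A_{ii}=0$ for nilpotent $A$ (a loop is a $1$-cycle), and taking the cycle $i\to j\to i$ recovers that $A_{ij}=1$ forces $A_{ji}=0$ (otherwise we have a $2$-cycle). Thus a nilpotent Boolean matrix admits no oriented cycle of $1$'s.

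For sufficiency, suppose $G(A)$ has no directed cycle. The key observation is that in an acyclic directed graph every walk is automatically a simple path: if a vertex were repeated, the segment between its two occurrences would be a directed cycle. A simple path uses at most $n$ vertices and therefore has length at most $n-1$, so $G(A)$ has no walk of length $n$. By the dictionary above, $(A^n)_{ij}=0$ for all $i,j$, that is $A^n=0$, and $A$ is nilpotent. Combined with the previous paragraph, this establishes the stated equivalence.

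The single step most worth stating carefully is this walk-to-path reduction in the acyclic case, since it is precisely what converts the absence of cycles---an a priori infinite condition, involving walks of all lengths---into the finite vanishing $A^n=0$. Everything else is the routine Boolean-semiring bookkeeping of the induction in the first paragraph, so I expect no genuine obstacle beyond making the walk-length bound explicit.
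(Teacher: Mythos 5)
Your proof is correct and follows the same route as the paper's: the paper's (much terser) argument likewise observes that an oriented $k$-cycle of $1$'s produces a $1$ on the diagonal of $A^k$ that persists in all higher powers, and that acyclicity forces $A^n=0$. You have simply made explicit the walk--power dictionary and the walk-to-path reduction that the paper leaves implicit.
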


\begin{proof}
A $1$ on a diagonal in $A$ remains $1$ in any power of $A$. An oriented $k$-cycle of $1$'s create $1$ on the diagonal in $A^k$. If $A$ has no oriented cycles of $1$'s (including 1-cycles, i.e., diagonal entries $1$) then $A^n=0$ necessarily. 
\end{proof}

\begin{corollary}
\label{prop_nilp_boolean_semiring}
The set $\mcN_n(\mathbb{B})$  is in a bijection with the set of directed acyclic graphs (DAGs) on $n$ ordered vertices.
\end{corollary}

\begin{proof}
    Indeed, to a direct acyclic graph $G$ on vertices $1,2,\dots,n$ assign the matrix $A$ with $A_{ij}=1$ iff there is an oriented edge from vertex $j$ to vertex $i$, otherwise $A_{ij}=0$. This is a bijection between DAGs and nilpotent Boolean matrices in view of the above lemma. 
\end{proof}

\begin{corollary}
\label{lemma_Boolean_nilpotent}
Let $a_n=|\mcN_n(\mathbb{B})|$ be the number of nilpotent Boolean matrices of size $n$. These numbers satisfy the recurrence relation 
\begin{equation}
\label{eqn_recurrence_reln_nilpot_Bool}
a_0=1, \ \ \   
a_n = \displaystyle{\sum_{k=1}^{n}} (-1)^{k-1} \binom{n}{k} 2^{k(n-k)}a_{n-k}  
\quad
\mbox{ for } 
n\ge 1. 
\end{equation} 
\end{corollary}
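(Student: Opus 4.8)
The plan is to pass through the bijection of Corollary~\ref{prop_nilp_boolean_semiring}, which identifies $\mcN_n(\mathbb{B})$ with the set of directed acyclic graphs (DAGs) on the ordered vertex set $[n]=\{1,\dots,n\}$, and then to derive the recurrence by a classical inclusion-exclusion argument organized around the \emph{source} vertices of a DAG. Recall that a source is a vertex of in-degree zero; under the matrix dictionary $A_{ij}=1 \iff$ there is an edge $j\to i$, vertex $i$ is a source precisely when row $i$ of $A$ vanishes. The starting observation is that every finite DAG admits a topological ordering and hence has a nonempty set of sources; write $\mathrm{Src}(G)$ for this set.

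For a subset $S\subseteq[n]$ of size $k$, let $h(S)$ denote the number of DAGs $G$ on $[n]$ for which $S\subseteq\mathrm{Src}(G)$, i.e.\ every vertex of $S$ has in-degree zero. I claim $h(S)=2^{k(n-k)}a_{n-k}$, depending only on $k$. Indeed, forcing each $v\in S$ to have in-degree zero forbids all edges into $S$; in particular it forbids edges between two vertices of $S$, so $S$ carries no internal and no incoming edges. The only edges touching $S$ are the $k(n-k)$ possible edges directed from $S$ to $[n]\setminus S$, and each may be present or absent freely, giving the factor $2^{k(n-k)}$. Because the vertices of $S$ have no incoming edges, they lie on no directed cycle, so $G$ is acyclic if and only if its restriction to $[n]\setminus S$ is acyclic; that restriction ranges over all DAGs on the $(n-k)$-element set $[n]\setminus S$, contributing the factor $a_{n-k}$. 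Multiplying yields the claim.

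Finally I would evaluate $a_n$ through the alternating sum $\sum_{\varnothing\neq S\subseteq[n]}(-1)^{|S|-1}h(S)$. Expanding $h(S)$ as a sum over DAGs and interchanging the order of summation, each DAG $G$ is counted with weight $\sum_{\varnothing\neq S\subseteq\mathrm{Src}(G)}(-1)^{|S|-1}=1-(1-1)^{|\mathrm{Src}(G)|}=1$, since $\mathrm{Src}(G)\neq\varnothing$. Hence the alternating sum equals $a_n$ exactly. Grouping the $\binom{n}{k}$ subsets of each size $k$ and inserting $h(S)=2^{k(n-k)}a_{n-k}$ gives
\[
a_n=\sum_{k=1}^{n}(-1)^{k-1}\binom{n}{k}2^{k(n-k)}a_{n-k},
\]
with base case $a_0=1$ counting the empty graph.

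The only genuinely delicate points are verifying that $h(S)$ depends solely on $|S|$ and that acyclicity of $G$ is equivalent to acyclicity of the induced subgraph on $[n]\setminus S$ — both hinge on the fact that source vertices cannot participate in any cycle — together with the identity that the signed sum over \emph{nonempty} subsets of a nonempty set collapses to $1$. The latter is precisely what forces the argument to be run over sources (a guaranteed nonempty distinguished set) rather than over arbitrary vertex subsets, and it is the conceptual crux of the proof.
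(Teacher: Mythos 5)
Your proof is correct, but it differs from the paper in an important way: the paper does not actually prove the recurrence at all. After establishing the bijection with labeled DAGs (Corollary~\ref{prop_nilp_boolean_semiring}), the paper's proof of Corollary~\ref{lemma_Boolean_nilpotent} consists of a single citation to the OEIS entry A003024 for the number of DAGs on $n$ labeled vertices, where the recurrence is recorded. What you have written is a self-contained derivation of that recurrence --- essentially Robinson's classical argument --- and every step checks out: every nonempty finite DAG has a nonempty source set; for $|S|=k$ the count $h(S)$ of DAGs in which all vertices of $S$ are sources is $2^{k(n-k)}a_{n-k}$, because edges into $S$ (including edges within $S$) are forbidden, the $k(n-k)$ edges from $S$ to the complement are free, and acyclicity of $G$ is equivalent to acyclicity of the induced subgraph on $[n]\setminus S$ since a source can lie on no directed cycle; and the signed sum $\sum_{\varnothing\neq S\subseteq \mathrm{Src}(G)}(-1)^{|S|-1}=1-(1-1)^{|\mathrm{Src}(G)|}=1$ correctly collapses the inclusion-exclusion because $\mathrm{Src}(G)\neq\varnothing$. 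Your observation that this last identity is why one must organize the count around sources (a canonically nonempty set) rather than arbitrary subsets is exactly the crux. The trade-off is the obvious one: the paper's citation is shorter and defers to known literature, while your argument makes the corollary independent of external references and makes visible where the alternating signs and the factor $2^{k(n-k)}$ come from; it would be a legitimate replacement for (or expansion of) the paper's proof.
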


\begin{proof}
See \cite{oeisA003024} for the references that the number of DAGs on $n$ labeled vertices satisfies relation \eqref{eqn_recurrence_reln_nilpot_Bool}.
\end{proof}

Matrices in $M_n(\mathbb{B})$ are in a bijection with endomorphisms of the free $\mathbb{B}$-semimodule $\mathbb{B}^n$, and the count of nilpotent matrices is given above. One can ask a natural question about counting nilpotent endomorphisms of a general finitely-generated projective semimodule $P$ instead of the free semimodule $\mathbb{B}^n$. Isomorphism classes of such semimodules are in a bijection with isomorphism classes of finite topological spaces $X$, see~\cite[Section 3.2]{IK-top-automata}, for instance. Semimodule $P$ associated to $X$ as above has its elements the open subsets of $X$. Zero element of $P$ is the empty subset of $X$ and the sum in $P$ is given by the union of sets. Counting nilpotent endomorphisms of $P$ is a combinatorial problem that generalizes the count of labeled DAGs, and it may be interesting for at least some classes of finite topological spaces. The linear counterpart of this problem might be counting nilpotent endomorphisms of a finite-dimensional module $M$ over a finite-dimensional $\mathbb{F}_q$-algebra. 




\FloatBarrier

\bibliographystyle{amsalpha} 
\bibliography{nilpotent_finite}

@article {Lei21,
    AUTHOR = {Leinster, Tom},
     TITLE = {The probability that an operator is nilpotent},
   JOURNAL = {Amer. Math. Monthly},
  FJOURNAL = {American Mathematical Monthly},
    VOLUME = {128},
      YEAR = {2021},
    NUMBER = {4},
     PAGES = {371--375},
}

@article {FH58,
    AUTHOR = {Fine, N. J. and Herstein, I. N.},
     TITLE = {The probability that a matrix be nilpotent},
   JOURNAL = {Illinois J. Math.},
  FJOURNAL = {Illinois Journal of Mathematics},
    VOLUME = {2},
      YEAR = {1958},
     PAGES = {499--504},
}

@article {AS90,
    AUTHOR = {Moh'd Z. Abu-Sbeih},
     TITLE = {On the number of spanning trees of Kn and Km, n},
   JOURNAL = {Discrete Mathematics},
    VOLUME = {84},
    ISSUE = {2},
      YEAR = {1990},
     PAGES = {205--207},
}

@article{DK25,
      AUTHOR = {Deepthi, Nayana Shibu and Kumar, Chanchal},  
      TITLE = {Combinatorial {I}dentities {U}sing the {M}atrix {T}ree {T}heorem}, 
      JOURNAL = {arXiv preprint \href{https://arxiv.org/abs/2504.21319}{arXiv:2504.21319}},
      YEAR = {2025},
      PAGES = {1--13},
}

@article{Pak09,
      AUTHOR = {Pak, Igor},  
      TITLE = {Tree bijections}, 
      JOURNAL = {IPAM workshop, \href{https://www.math.ucla.edu/~pak/lectures/bij-trees-talk1.pdf}{https://www.math.ucla.edu/\urltilde pak/lectures/bij-trees-talk1.pdf}},
      YEAR = {2009},
      PAGES = {1--23},
}

@article {Butler1972,
author = {Butler, Kim Ki-Hang},
title = {The number of idempotents in (0,1)-matrix semigroups},
journal = {Linear Algebra and its Applications},
volume = {5},
number = {3},
year = {1972},
pages = {233-246},
}

@misc{oeisA003024,
  author = {{OEIS Foundation Inc.}},
  title = {The {O}n-{L}ine {E}ncyclopedia of {I}nteger {S}equences {A}003024},
  year = {2025},
  note = {Founded in 1964 by N.J.A. Sloane \href{https://oeis.org/A003024}{https://oeis.org/A003024}},
  url = {https://oeis.org/A003024}
}

@article {FS58,
    AUTHOR = {Fiedler, Miroslav and Sedláček, Jiří},
     TITLE = {On W-bases of directed graphs},
   JOURNAL = {Časopis pro pěstování matematiky},
    VOLUME = {83},
      YEAR = {1958},
     PAGES = {214-225},
}

@book{CCPS11,
    TITLE = {Combinatorial {O}ptimization},
   AUTHOR = {Cook, William J. and Cunningham, William H. and Pulleyblank, William R. and Schrijver, Alexander},
   SERIES = {Wiley Series in Discrete Mathematics and Optimization},
     YEAR = {2011},
   VOLUME = {33},
PUBLISHER = {John Wiley \& Sons}
}

@book{BM76,
    TITLE = {Graph {T}heory with {A}pplications},
   AUTHOR = {Bondy, John A. and Murty, Uppaluri  S.R.},
     YEAR = {1976},
PUBLISHER = {American Elsevier Publishing Company},
    PAGES = {1--264},
}

@article {A_Joyal81,
    AUTHOR = {Joyal, André},
     TITLE = {Une théorie combinatoire des séries formelles},
   JOURNAL = {Advances in Mathematics 42
    \href{https://www.sciencedirect.com/science/article/pii/0001870881900529}},
      YEAR = {1981},
     PAGES = {16},
}

@article{IK-top-automata,
  title={Topological theories and automata},
  author={Im, Mee Seong and Khovanov, Mikhail},
  journal={arXiv preprint arXiv:2202.13398, submitted to Adv. Math.},
  year={2022},
  pages = {1--70},
}

@article{GR25,
  title={Motives of nullcones of quiver representations},
  author={G\"osmann, Lydia and Reineke, Markus},
  journal={arXiv preprint \href{https://arxiv.org/abs/2502.02353}{arXiv:2502.02353}},
  year={2025},
  pages = {1--22},
}

@book {CG97,
    AUTHOR = {Chriss, Neil and Ginzburg, Victor},
     TITLE = {Representation theory and complex geometry},
    SERIES = {Modern Birkh\"auser Classics},
      NOTE = {Reprint of the 1997 edition},
 PUBLISHER = {Birkh\"auser Boston Inc.},
   ADDRESS = {Boston, MA},
      YEAR = {2010},
     PAGES = {x+495},
}

@incollection {Q,
    AUTHOR = {Quillen, Daniel},
     TITLE = {Higher algebraic {$K$}-theory. {I}},
 BOOKTITLE = {Algebraic {$K$}-theory, {I}: {H}igher {$K$}-theories ({P}roc.
              {C}onf., {B}attelle {M}emorial {I}nst., {S}eattle, {W}ash.,
              1972)},
    SERIES = {Lecture Notes in Math.},
    VOLUME = {Vol. 341},
     PAGES = {85--147},
 PUBLISHER = {Springer, Berlin-New York},
      YEAR = {1973},
}

@book {Jac89,
    AUTHOR = {Jacobson, Nathan},
     TITLE = {Basic algebra. {II}},
   EDITION = {Second},
 PUBLISHER = {W. H. Freeman and Company, New York},
      YEAR = {1989},
     PAGES = {xviii+686},
}

\end{document}